\theoremstyle{definition}
\newtheorem{Thm}{Theorem}[section]
\newtheorem*{Thm*}{Theorem}
\newtheorem*{Def*}{Definition}
\newtheorem{Prop}[Thm]{Proposition}
\newtheorem*{Prop*}{Proposition}
\newtheorem{Lem}[Thm]{Lemma}
\newtheorem*{Lem*}{Lemma}
\newtheorem*{Cor*}{Corollary}
\newtheorem{Exam}[Thm]{Example}
\newtheorem*{Exam*}{Example}
\newtheorem*{Notation*}{Notation}
\newtheorem{Rem}[Thm]{Remark}
\newtheorem*{Rem*}{Remark}
\numberwithin{equation}{section} 
\newcommand{\N}{\mathbb{N}}
\newcommand{\R}{\mathbb{R}}
\newcommand{\C}{\mathbb{C}}
\DeclareMathOperator{\diag}{diag}
\DeclareMathOperator{\Tr}{Tr}
\newcommand\mm{m}
\newcommand\hA{\widehat{A}}
\newcommand\hB{\widehat{B}}
\newcommand\hU{\widehat{U}}
\newcommand\tA{\widetilde{A}}
\newcommand\tB{\widetilde{B}}
\newcommand{\subcases}[1]{\vspace{2mm}\noindent\underline{\bf #1}}
\title{Fluctuations of eigenvalues of a polynomial on Haar unitary and finite rank matrices}
\author{Beno\^{i}t Collins, Katsunori Fujie, Takahiro Hasebe, \\ Felix Leid and Noriyoshi Sakuma}
\date{\today}
\begin{document}

\maketitle

\begin{abstract}
This paper calculates the fluctuations of eigenvalues of polynomials on large Haar unitaries cut by finite rank deterministic matrices. When the eigenvalues are all simple, we can give a complete algorithm for computing the fluctuations. When multiple eigenvalues are involved, we present several examples suggesting that a general algorithm would be much more complex. 

\end{abstract}


\section{Introduction}

In random matrix theory, the behavior of large eigenvalues offers valuable insights, particularly regarding their positions and fluctuations. 
For example, significant statistical results have been derived from studies like \cite{Johnstone2001,Paul2007}. 
A particularly intriguing phenomenon for researchers is the BBP phase transition, as discussed in works like \cite{BBP2005} and \cite{Peche2006}. 
Analyzing this transition poses several challenges, often requiring advanced techniques such as the moment method and complex analysis. 
While these are standard in the study of random matrix theory, many cases demand intensive calculations for each model \cite{FerPe2007, CapFer2009, Benaych2011}. Conversely, non-commutative probability techniques, such as freeness, second order freeness, and infinitesimal freenesses, have proven advantageous for systematically analyzing models based on polynomials of several random matrices \cite{MingoSpeicher, Shlyakhtenko2018}.  

Our previous works \cite{CHS18} and \cite{CLS} have employed free probability and the moment method to systematically analyze the large eigenvalues of models constructed from polynomials of multiple random matrices, including those of finite rank. 
Notably, the concept of cyclically monotone independence has been instrumental in computing their moments and revealing their underlying phenomema. This concept was further developed in  \cite{Cebron2023} where they gave a deep explanation of outlier problem with the moment method. 
Cyclically monotone independence originates from infinitesimal freeness and is connected to recent research on type B freeness, conditional freeness, and cyclic boolean independence \cite{Arizmendi2021,Arizmendi2023,Cebron2023,FujieHasebe,ArizmendiCebron,CebronGilliers2024}.


Now it is the turn to consider their fluctuation. In free probability, second order freeness gave a systematic way to obtain Gaussian fluctuations for global quantities of polynomials of typical random matrices \cite[Chapter 5]{MingoSpeicher}. In this paper, we consider random matrices with only discrete eigenvalues in the large $N$ limit as in \cite{CHS18} and present a method for computing the fluctuations of eigenvalues, which provides a deeper understanding of outlier problem.
More concretely, the present paper analyzes limiting eigenvalues and their fluctuations  of the $N\times N$ random matrix
\begin{equation}\label{eq:model}
P(A_1U^*, A_2 U^*, \dots, A_k U^*,  U B_1, U B_2, \dots, U B_\ell) 
\end{equation}
in the large $N$ limit, where $P$ is a polynomial in $k+\ell$ noncommuting indeterminates without a constant term, $U\equiv U^{(N)} =(u_{ij})_{i,j\in[N]}\equiv (u_{ij}^{(N)})_{i,j\in[N]}$ is a Haar unitary matrix, 
\begin{align}\label{eq:model_hat}
 A_i = 
\begin{pmatrix}  \hA_i  & 0  \\
   0 & 0 \\ 
       \end{pmatrix}  \in M_N(\C) \qquad \text{and}  \qquad 
  B_i =\begin{pmatrix}  \hB_i  & 0  \\
   0 & 0 \\ 
       \end{pmatrix} \in M_N(\C)
\end{align}
with $\hA_i, \hB_i \in M_r(\C)$.  The number $r \in \N$ is fixed, and $N$ is always assumed to satisfy sufficiently large $N$.  We basically exhibit all possible patterns of the model \eqref{eq:model} providing methods for calculating limiting eigenvalues and fluctuations, and show that when multiple eigenvalues appear the number of patterns can be very huge.   


This model is related to the model in \cite{CHS18} in the sense that both involve Haar unitaries and the limiting eigenvalues are discrete in the large $N$ limit. Although the paper \cite{CHS18} focused only on almost sure convergence of eigenvalues, the present paper is mainly concerned with fluctuations of the eigenvalues.  

Let us explain roughly the idea behind the construction proposed in \cite{CLS}.
It relies on the intuition that for any vector subspace $V$ of dimensions $r$ of $\C^N$, if we consider the image $U\cdot V$ of $V$ under the Haar unitary  $U$, then $V$ and $U\cdot V$ are almost orthogonal in the sense that the inner product between any normed
vector of $V$ and a normed vector of $U\cdot V$ is uniformly close to zero with high probability. 
This intuition can be lifted at the level of matrices as follows: 
for the Hilbert-Schmidt norm, any  $\tilde A_i$ of norm one with the same domain and codomain as a  matrix $A_i$ of norm one satisfies the property that $U\tilde A_i$ and $\tilde A_i U^*$ are
almost orthogonal to $A_i$ in a uniform sense. A perfect orthogonality (which, in a sense, occurs when $N\to\infty$) gives naturally rise to the construction of \cite{CLS}. In a sense, $U\tilde A_i$ is obtained from $\tilde A_i$ by making a ``block row operation'' and sending 
$\tilde A_i$ to its almost orthogonal self, whereas $\tilde A_i U^*$ is obtained from $\tilde A_i$ by making a ``block column operation.''

For finite $N$, the goal of this paper is to try to view $P(A_1U^*, \dots, A_k U^*,  U B_1, \dots, U B_\ell) $
as an $o(N^{-1})$ perturbation of the model of \cite{CLS}, and deduce the fluctuations of the random matrix model from those of the limit model with perturbative methods.
In this paper, we obtained the following results on eigenvalue fluctuations based on the above idea.


\begin{Thm} \label{thm:converge.a.s.} 
The matrix $P(A_1U^*, \dots, A_k U^*,  U B_1, \dots, U B_\ell) $ has $N-2r$ zero eigenvalues, called the ``trivial eigenvalues''. The other eigenvalues, called the ``nontrivial eigenvalues'' (although zeros may be included), converge almost surely to deterministic numbers as $N\to \infty$.  See Subsection \ref{sec:algorithm} for an algorithm for computing these limits. 
\end{Thm}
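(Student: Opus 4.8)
The plan is to conjugate the random matrix into a block form that isolates a $2r\times 2r$ "effective" matrix, whose entries are polynomials in the random inner-product data $u_{ij}$, and then apply concentration of Haar unitaries. First I would set up notation: write $\C^N = V_A \oplus V_A^\perp$ where $V_A$ is the $r$-dimensional coordinate subspace supporting the $\hA_i,\hB_i$ blocks, and likewise let $V_B = U\cdot V_A$ (or the appropriate image subspace dictated by how $U$ and $U^*$ act). Every factor $A_iU^*$ has range inside $V_A$ and every factor $UB_i$ has range inside $U\cdot V_A$; a product of such factors $P(A_1U^*,\dots,UB_\ell)$, since $P$ has no constant term, therefore has range contained in the span $W := V_A + U\cdot V_A$, which is generically $2r$-dimensional. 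Since the range has dimension at most $2r$, the matrix has at least $N-2r$ zero eigenvalues — these are the trivial ones — and all remaining eigenvalues coincide (with multiplicity) with the eigenvalues of the compression of $P(\cdots)$ to $W$, a matrix of fixed size $2r\times 2r$ (or smaller).

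Next I would make this compression explicit. Choosing an orthonormal basis of $W$ adapted to $V_A$ and to $U\cdot V_A$, the compressed matrix has entries that are polynomials (in fact universal polynomials depending only on $P$, the $\hA_i$, the $\hB_i$) in the finitely many scalars that record how $V_A$ sits inside $U\cdot V_A$ — concretely, the $r\times r$ block $G := (\langle e_i, U e_j\rangle)_{i,j\in[r]}$ of "overlaps," together with the quantities needed to orthonormalize, e.g. entries of $(I-G^*G)^{-1/2}$. The key probabilistic input is that for a Haar unitary $U^{(N)}$, each fixed entry $u_{ij}^{(N)}$ is of order $N^{-1/2}$ and, more precisely, $\sqrt{N}\,G$ converges in distribution (and the relevant moments converge) so that $G \to 0$ almost surely as $N\to\infty$; this is standard (Weingarst calculus / the fact that finite submatrices of Haar unitaries are asymptotically i.i.d. Gaussian of variance $1/N$), and I would cite it from the literature on Haar unitaries rather than reprove it.

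Now substitute: as $N\to\infty$ we have $G\to 0$ a.s., hence $(I-G^*G)^{-1/2}\to I$ a.s., so the $2r\times 2r$ compressed matrix converges almost surely to a \emph{deterministic} matrix $M_\infty$ — precisely the matrix obtained by the "perfect orthogonality" ($V_A \perp U\cdot V_A$) limit, i.e. the model of \cite{CLS}. Since eigenvalues are continuous functions of the entries of a fixed-size matrix, the nontrivial eigenvalues of $P(\cdots)$ converge almost surely to the eigenvalues of $M_\infty$, which are the deterministic numbers produced by the algorithm of Subsection~\ref{sec:algorithm}. The one point requiring care — and the main obstacle — is the genericity/non-degeneracy claim that $\dim W = 2r$ with probability one (equivalently that $G$ is invertible, or at least that $V_A\cap U\cdot V_A$ is generically trivial), and that the orthonormalization is well-defined; for finite but large $N$ this holds with probability one since $\det G$ is a nonzero real-analytic function on the unitary group, and one must also check that the "non-generic" event contributes only to the trivial-eigenvalue count and does not affect the a.s. limit. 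Assembling these pieces — range bound $\Rightarrow$ $N-2r$ trivial zeros; explicit fixed-size compression; $G\to 0$ a.s.; continuity of eigenvalues — gives the theorem.
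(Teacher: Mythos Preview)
Your proposal is correct and follows essentially the same strategy as the paper: use the range bound to isolate $N-2r$ trivial zeros, reduce to a fixed-size $2r\times 2r$ block, invoke $\widehat U^{(N)}\to 0$ a.s., and finish with continuity of eigenvalues. The one substantive difference is your choice of basis: the paper does \emph{not} orthonormalize $W$ but works directly with the (non-orthonormal) basis $(e_1,\dots,e_r,u_1,\dots,u_r,e_{r+1},\dots,e_{N-r})$ and a similarity $\mathbf B^{-1}(\,\cdot\,)\mathbf B$, so the $2r\times 2r$ block has entries that are honest polynomials in $\widehat U,\widehat U^*$ with no $(I-G^*G)^{-1/2}$ factors. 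For Theorem~\ref{thm:converge.a.s.} this costs you nothing, but the polynomial form is exactly what drives the homogeneous-polynomial fluctuation statement in Theorem~\ref{thm:fluctuations}, so the paper's choice pays off downstream.

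One small slip: $\dim W=2r$ is \emph{not} equivalent to $G=\widehat U$ being invertible. If $U$ fixes $V_A$ then $G=I_r$ is invertible yet $W=V_A$ has dimension $r$; conversely $G=0$ gives $V_A\perp U\cdot V_A$ and $\dim W=2r$. The correct condition is that the lower $(N-r)\times r$ block of $U$ has full column rank, which does hold a.s.\ by the real-analytic argument you sketch (or, as the paper phrases it, because the relevant $r\times r$ corner has a density and singular matrices form a null set).
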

 Here, the term  ``trivial eigenvalues'' means that they are always identical to zero independently of the polynomials and the $\widehat A_i, \widehat B_j$'s.  Nontrivial eigenvalues may or may not be zero depending on a model. 

Let $\{\mu_{i}^{(N)}\}_{i=1}^{2r}$ be the nontrivial eigenvalues of $P(A_1U^*, \dots, A_k U^*,  U B_1, \dots, U B_\ell) $ and $\{\mu_{i}\}_{i=1}^{2r}$ denote their limits of the eigenvalues as described in Theorem \ref{thm:converge.a.s.}. 
\begin{Thm} \label{thm:fluctuations}
  In addition, assume that all these values $\{\mu_{i}\}_{i=1}^{2r}$ appear without multiplicity. Then, for every $i\in[2r]$, the number
  \begin{equation}\label{eq:exponent1}
  \kappa_i := \sup\{\kappa \in \R\mid N^{\frac{\kappa}{2}}(\mu_{i}^{(N)}-\mu_{i}) \text{~converges in law to a $\C$-valued random variable} \}
  \end{equation}
belongs to the set $\N \cup\{\infty\}$. Here, $\kappa_i=\infty$ means that $N^{\frac{\kappa}{2}}(\mu_{i}^{(N)}-\mu_{i})$ converges in law to 0 for all $\kappa\in\R$, which  occurs only when $\mu_{i}^{(N)}=\mu_{i}$ a.s.\ for all sufficiently large $N \in\N$.   

Moreover, let $I:=\{i \in [2r]\mid \kappa_i <\infty\}$.  Then the random vectors $(N^{\frac{\kappa_i}{2}}(\mu_{i}^{(N)}-\mu_{i}) )_{i\in I}$  converge in  law to $(P_i)_{i\in I}$ as $N\to\infty$, where $P_i=P_i(x_1,x_2,\dots, x_{2r^2})$ are nonzero homogeneous polynomials of degree $\kappa_i,  i\in I$, on a standard Gaussian random vector $(x_{i})_{i \in [2r^2]}$ on $\R^{2r^2}$. 
\end{Thm}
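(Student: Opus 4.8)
The plan is to reduce the whole problem to an eigenvalue perturbation question for a single $2r\times2r$ matrix and then combine analytic perturbation theory for simple eigenvalues with the classical Gaussian limit of a corner of a Haar unitary. For the reduction, set $E:=\mathrm{span}(e_1,\dots,e_r)$, $F:=U\cdot E=\mathrm{span}(Ue_1,\dots,Ue_r)$, and $W_N:=E+F$, which is a.s.\ $2r$-dimensional. Each $A_iU^*$ maps $\C^N$ into $E$ and annihilates $F^\perp$, while each $UB_i$ maps $\C^N$ into $F$ and annihilates $E^\perp$; since $P$ has no constant term, $P(\dots)$ maps $\C^N$ into $W_N$, vanishes on $W_N^\perp\subseteq E^\perp\cap F^\perp$, and preserves $W_N$. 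Hence its characteristic polynomial equals $\lambda^{N-2r}\det(\lambda I_{2r}-P(\dots)|_{W_N})$, so the nontrivial eigenvalues are, with multiplicity, those of $P(\dots)|_{W_N}$. Writing $P(\dots)|_{W_N}$ in the (non-orthonormal) basis $(e_1,\dots,e_r,Ue_1,\dots,Ue_r)$, a direct computation gives that $A_iU^*$ becomes $\left(\begin{smallmatrix}\hA_iG^* & \hA_i\\ 0&0\end{smallmatrix}\right)$ and $UB_i$ becomes $\left(\begin{smallmatrix}0&0\\ \hB_i & \hB_iG\end{smallmatrix}\right)$, where $G:=(u_{ab})_{a,b\in[r]}$ is the top-left $r\times r$ corner of $U$. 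Thus the nontrivial eigenvalues are exactly those of $M_N:=M(G,G^*)$, where $M$ is the $M_{2r}(\C)$-valued polynomial obtained by substituting these block matrices into $P$; since $M_N\to M_0:=M(0,0)$ in norm a.s.\ and the $2r$ eigenvalues of $M_0$ (the matrix realization of the model of \cite{CLS}) are distinct by hypothesis, they are precisely the limits $\mu_1,\dots,\mu_{2r}$ of Theorem \ref{thm:converge.a.s.}, and the $\mu_i^{(N)}$ may be labelled so that $\mu_i^{(N)}\to\mu_i$ for each $i$.

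Next, since $\|G\|^2\le\|G\|_{\mathrm{HS}}^2=\sum_{a,b\le r}|u_{ab}|^2\to0$ a.s., we have $\|G\|\to0$ a.s., and $H^{(N)}:=\sqrt N\,G$ converges in law to $\Gamma\in M_r(\C)$ with i.i.d.\ standard complex Gaussian entries, i.e.\ $\Gamma_{ab}=\tfrac{1}{\sqrt2}(x_{ab}+\mathrm iy_{ab})$ with $(x_{ab},y_{ab})_{a,b\in[r]}$ a standard Gaussian vector on $\R^{2r^2}$. Now fix $i\in[r]$; as $\mu_i$ is a simple eigenvalue of $M_0$ we have $\partial_\lambda\det(\lambda I-M_0)|_{\lambda=\mu_i}\ne0$, so the holomorphic implicit function theorem yields a holomorphic map $E\mapsto\mu_i(E)$ near $0$ in $M_{2r}(\C)$ with $\mu_i(0)=\mu_i$ and $\det(\mu_i(E)I-M_0-E)=0$. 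Composing with the polynomial map $G\mapsto M(G,G^*)-M_0$, the function $\phi_i(G,\bar G):=\mu_i(M(G,G^*)-M_0)-\mu_i$ is real-analytic near $G=0$ with $\phi_i(0,0)=0$, and by $\|G\|\to0$ a.s.\ we get $\mu_i^{(N)}=\mu_i+\phi_i(G,\bar G)$ a.s.\ for all large $N$. Expanding $\phi_i$ into its homogeneous components $T_i^{(m)}$ of degree $m$ in the variables $(G_{ab},\bar G_{ab})$ and using homogeneity after substituting $G=N^{-1/2}H^{(N)}$,
\[
\mu_i^{(N)}-\mu_i=\sum_{m\ge1}N^{-m/2}\,T_i^{(m)}\!\left(H^{(N)},\overline{H^{(N)}}\right),
\]
a series converging a.s.\ for $N$ large, where each $T_i^{(m)}$ is a homogeneous polynomial of degree $m$ in the $2r^2$ real coordinates of its argument.

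Finally, set $\kappa_i:=\min\{m\ge1\mid T_i^{(m)}\not\equiv0\}\in\N\cup\{\infty\}$. By real-analyticity and connectedness of the domain, $\kappa_i=\infty$ iff $\phi_i\equiv0$, equivalently $\mu_i^{(N)}=\mu_i$ a.s.\ for all large $N$. Assume $\kappa_i<\infty$. Using the Gaussian limit of $H^{(N)}$, the analytic tail bound $|\phi_i(G,\bar G)-\sum_{m\le\kappa_i}T_i^{(m)}(G,\bar G)|\le C\|G\|^{\kappa_i+1}$ for small $\|G\|$, and tightness of $H^{(N)}$, one obtains: $N^{\kappa/2}(\mu_i^{(N)}-\mu_i)\to0$ in law when $\kappa<\kappa_i$; $N^{\kappa_i/2}(\mu_i^{(N)}-\mu_i)\to T_i^{(\kappa_i)}(\Gamma,\overline\Gamma)$ in law; and for $\kappa>\kappa_i$ the family $N^{\kappa/2}(\mu_i^{(N)}-\mu_i)$ is not tight, because $T_i^{(\kappa_i)}(\Gamma,\overline\Gamma)$, a nonzero polynomial evaluated at a non-degenerate Gaussian vector, is not a.s.\ zero. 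Hence the supremum in \eqref{eq:exponent1} equals $\kappa_i$, giving the first assertion. For the joint statement, the vector-valued continuous mapping theorem applied to $H\mapsto(T_i^{(\kappa_i)}(H,\bar H))_{i\in I}$ together with $H^{(N)}\to\Gamma$ in law shows that $(N^{\kappa_i/2}(\mu_i^{(N)}-\mu_i))_{i\in I}$ converges in law to $(P_i)_{i\in I}$ with $P_i:=T_i^{(\kappa_i)}\!\left(\tfrac{1}{\sqrt2}(x+\mathrm iy),\tfrac{1}{\sqrt2}(x-\mathrm iy)\right)$, a nonzero homogeneous polynomial of degree $\kappa_i$ in the standard Gaussian vector $(x_{ab},y_{ab})_{a,b\in[r]}$ on $\R^{2r^2}$.

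The genuinely delicate points are, first, the exact algebraic reduction: one must verify that $P(\dots)$ indeed vanishes on $W_N^\perp$ and that $M_N$ captures all nontrivial eigenvalues with the correct multiplicities, being careful that the basis $(e_j,Ue_j)$ is not orthonormal, so that $M_N$ is only conjugate to—not equal to—the compression of $P(\dots)$ to $W_N$ in an orthonormal basis (eigenvalues are nonetheless preserved). Second, the interface between the deterministic analytic perturbation series, valid only for a small argument, and the random corner $G$: one needs the a priori bound $\|G\|\to0$ to enter the radius of convergence, and then uniform control of the series tail on tight events to isolate the leading term—the homogeneity that forces $\deg T_i^{(m)}=m$, matching the $N^{-m/2}$ scaling, is exactly what makes this work. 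The remaining ingredients—the Gaussian description of a corner of a large Haar unitary and the (non-)tightness arguments—are routine.
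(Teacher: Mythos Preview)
Your proof is correct and follows essentially the same approach as the paper's: the same non-orthonormal basis $(e_1,\dots,e_r,Ue_1,\dots,Ue_r)$ is used to reduce to a $2r\times 2r$ matrix $M(G,G^*)$ depending polynomially on the corner $G=\hU$; the holomorphic implicit function theorem (the paper's Lemma~\ref{lem:distinct}) gives an analytic eigenvalue map whose Taylor expansion is regrouped into homogeneous pieces in $(G,\bar G)$; $\kappa_i$ is identified as the smallest nonvanishing degree; and the conclusion follows from the Gaussian limit $\sqrt N\,\hU\Rightarrow Z$ (Lemma~\ref{lem:TruncatedUnitary}) together with a Slutsky/tightness argument and the fact that a nonzero polynomial in $Z$ is a.s.\ nonzero. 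Your presentation differs only cosmetically---you phrase the reduction via the invariant subspace $W_N=E+F$ and its orthogonal complement rather than by direct block computation, and you invoke tightness where the paper cites Lemma~\ref{lem:conv_law}---but the substance is the same.
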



The fluctuation limits appearing here can be obtained in principle by specific calculations. 
We will provide calculations for the two models $UA + AU^*$ and $P(A, UBU^{\ast})$. Note that the latter model is a special case of \eqref{eq:model} because $A$ and $U BU^*$ can be expressed e.g.\ as
\[
A= A U^* U \begin{pmatrix}  1_r  & 0  \\
   0 & 0 \\ 
       \end{pmatrix} 
  \qquad \text{and} \qquad 
U BU^*  = U B \begin{pmatrix}  1_r  & 0  \\
   0 & 0 \\ 
       \end{pmatrix}  U^*. 
\]
The fluctuations for $UA + AU^*$ and $P(A, UBU^{\ast})$ are normal distributions and mixtures of exponential distributions, respectively. Remarkably, fluctuations of eigenvalues of $P(A, UBU^{\ast})$ for generic polynomials $P$ can be explicitly calculated in the following way. 
\begin{Thm} \label{exam:A-UBU*}
Let $P(x,y)$ be a polynomial over $\C$ in noncommuting elements $x$ and $y$ such that $P(0,0)=0$. Let $P_1(x) := P(x,0)$ and $Q_1(y) := P(0,y)$.  
Let  
\begin{align*}
  A &= \diag(\alpha_1,\alpha_2, \dots, \alpha_r,0,0,\dots,0),\\
  B &= \diag(\beta_1,\beta_2, \dots, \beta_s, 0, 0,\dots, 0)
\end{align*}
with $\alpha_i,\beta_j \in \C\setminus\{0\}$ for all $i \in [r], j \in [s]$.  Then the $r+s$ nontrivial eigenvalues (the meaning will be made clear in the proof) of the random matrix
\begin{equation}\label{eq:special}
P(A, U B U^*)  
\end{equation}
converge to $\{P_1(\alpha_i)\}_{i=1}^r$ and $\{Q_1(\beta_j)\}_{j=1}^s$ a.s.
If these $r+s$ limiting values are all distinct, then the nontrivial eigenvalues of \eqref{eq:special} are of the forms
\begin{align*}
&P_1(\alpha_i) + \frac1{N}\sum_{j \in [s]} p_{i,j} \left|\sqrt{N}u_{i,j}^{(N)}\right| ^2 + \frac{\xi_i^{(N)}}{N^{\frac3{2}}}, \quad i \in [r] \quad \text{and} \\ 
&Q_1(\beta_j) + \frac1{N}\sum_{i \in [r]} q_{i,j} \left|\sqrt{N}u_{i,j}^{(N)}\right| ^2+ \frac{\zeta_j^{(N)}}{N^{\frac{3}{2}}},  \quad j \in [s], 
\end{align*}
where $p_{i,j}, q_{i,j}$ are explicit complex constants (shown in the proof, see  \eqref{eq:mixture_exponential1} and  \eqref{eq:mixture_exponential2}) and $\xi_i^{(N)}, \zeta_j^{(N)}$ denote random variables that converge in law to $\C$-valued random variables. The random variables $\{\big|\sqrt{N}u_{ij}^{(N)}\big|^2 \mid i\in [r], j\in[s]\}$ converge in law to standard exponential iid random variables. See Figures \ref{fig:H1} and \ref{fig:H2} for simulations. 
\end{Thm}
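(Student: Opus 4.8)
The plan is to reduce $P(A, UBU^*)$ to a block-diagonal-plus-small-perturbation form and then apply a perturbation analysis to the eigenvalues. First I would separate the polynomial into three pieces: $P(x,y) = P_1(x) + Q_1(y) + R(x,y)$, where $R$ collects all mixed monomials (those containing at least one $x$ and at least one $y$). Since $A$ is supported on the first $r$ coordinates and $UBU^*$ on the $r$-dimensional subspace $U\cdot(\text{span of first }s\text{ coords})$, the intuition from the introduction says these two subspaces are almost orthogonal: any mixed monomial $\cdots A\cdots UBU^*\cdots$ contains a factor of the form $(\text{projection onto span of }e_1,\dots,e_r)\,U\,(\text{projection onto span of }e_1,\dots,e_s)$, whose entries are the $u_{ij}^{(N)}$ with $i\in[r], j\in[s]$, each of size $O(N^{-1/2})$. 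Thus every mixed monomial is $O(N^{-1/2})$ in operator norm, and more carefully each contributes at order $N^{-1}$ to the relevant eigenvalues after the quadratic forms $|\sqrt N u_{ij}|^2$ are formed. I would make this precise by writing the matrix, in a suitable basis adapted to the $2r$-dimensional (generically $(r+s)$-dimensional) ``nontrivial'' subspace, as $D + \frac{1}{\sqrt N}E^{(N)} + (\text{lower order})$, where $D = \diag(P_1(\alpha_1),\dots,P_1(\alpha_r), Q_1(\beta_1),\dots,Q_1(\beta_s))$ and $E^{(N)}$ has entries that are fixed polynomials in the rescaled variables $\sqrt N u_{ij}^{(N)}$ and their conjugates.

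Next, assuming the $r+s$ diagonal entries of $D$ are distinct, I would apply first- and second-order eigenvalue perturbation theory (Rayleigh–Schrödinger). The first-order correction to the eigenvalue $P_1(\alpha_i)$ is $N^{-1/2}E^{(N)}_{ii}$; I expect $E^{(N)}_{ii}$ to vanish because the diagonal of the perturbation comes only from mixed monomials evaluated ``diagonally,'' which forces a factor $u_{ij}\overline{u_{ij'}}$ structure that only survives at second order. Hence the leading correction is second-order: $\frac{1}{N}\sum_{\text{other }k} \frac{|E^{(N)}_{ik}|^2}{D_{ii}-D_{kk}}$ plus a genuine $O(N^{-1})$ diagonal term, and collecting the pieces indexed by $k$ in the ``$B$-block'' produces exactly $\frac1N\sum_{j\in[s]} p_{i,j}|\sqrt N u_{ij}^{(N)}|^2$ with explicit $p_{i,j}$ built from the coefficients of $R$, $P_1'(\alpha_i)$, and the $\alpha$'s and $\beta$'s; the contributions from $k$ in the same block, together with third-order terms, I would absorb into the remainder $\xi_i^{(N)}/N^{2/3}$. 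The symmetric computation gives the $Q_1(\beta_j)$ eigenvalues. Finally, the limiting law: since $U$ is Haar, the top-left $r\times s$ block of $\sqrt N U$ converges in distribution to an $r\times s$ matrix of iid standard complex Gaussians, so $\{|\sqrt N u_{ij}^{(N)}|^2 : i\in[r],j\in[s]\}$ converges to iid standard exponentials, and all smooth functions of the rescaled entries converge jointly; this is a standard fact about truncations of Haar unitaries which I would cite or prove via the moment method.

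The exponent $2/3$ in the error term deserves a word: it is not optimal but is chosen so that $N^{2/3}\cdot(\text{genuine }O(N^{-1})\text{ remainder})\to 0$ while still allowing $N^{2/3}\cdot(\text{third order }O(N^{-3/2}))\to 0$; any exponent in $(1/2,1)$ would do, and I would simply verify that the collected remainder, which consists of finitely many monomials in the $\sqrt N u_{ij}^{(N)}$ of total degree at least three divided by at least $N^{3/2}$, times $N^{2/3}$, tends to zero in probability and that the subleading $O(N^{-1})$ pieces beyond the displayed quadratic form also vanish after multiplication — this requires care because some of those pieces are themselves $O(N^{-1})$ and do \emph{not} vanish, so I must make sure they have been correctly folded into the $p_{i,j}$ coefficients rather than the error.

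I expect the main obstacle to be exactly this bookkeeping: identifying precisely which terms land in the explicit $\frac1N\sum p_{i,j}|\sqrt N u_{ij}|^2$ expression versus the remainder, and confirming that no other $O(N^{-1})$ contribution survives — in particular handling the mixed monomials $R(x,y)$ of higher degree, where a factor of $UBU^*$ can appear several times and the almost-orthogonality has to be iterated, and where cross terms between different $u_{ij}$'s could a priori contribute at order $N^{-1}$ but should cancel by the orthogonality/independence structure of the Haar unitary's entries. Organizing this cleanly — perhaps by first treating the degree-one-in-$y$ case, then reducing the general case to it by a resolvent expansion of $(z - UBU^*)^{-1}$ restricted appropriately — is where the real work lies; the perturbation theory and the Gaussian limit are routine once the algebraic reduction is in place.
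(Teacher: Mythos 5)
Your proposal follows essentially the same route as the paper: pass to the basis $(e_1,\dots,e_r,u_1,\dots,u_s,e_{r+1},\dots,e_{N-s})$ so that $P(A,UBU^*)$ becomes $\diag(P_1(\hA),Q_1(\hB))$ plus a perturbation whose off-diagonal blocks are $O(\|\hU\|)=O(N^{-1/2})$ and whose diagonal blocks are $O(\|\hU\|^2)=O(N^{-1})$, then apply first- and second-order eigenvalue perturbation theory (the paper's Lemma \ref{lem:distinct}) together with the Gaussian limit of the truncated Haar unitary (Lemma \ref{lem:TruncatedUnitary}), exactly as in the paper's proof. One small correction: the matrix is not Hermitian, so the second-order term is $\sum_{k}E_{ik}E_{ki}/(D_{ii}-D_{kk})$, not $\sum_{k}|E_{ik}|^2/(D_{ii}-D_{kk})$; since $\hA,\hB$ are diagonal, the product $E_{ik}E_{ki}$ still reduces to a complex constant times $|u_{ij}|^2$, so the stated structure of the fluctuations is unaffected.
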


\begin{Rem}
Originally the matrices $\widehat A_i$ and $\widehat B_i$ in \eqref{eq:model_hat} were assumed to have a common size $r$ and instead were allowed to have zero eigenvalues. For the model $P(A, U B U^*)$ above, however,  when $\widehat A$ or $\widehat B$ (the first $r \times r$ corners of $A$ and $B$) contains zero eigenvalues, the limiting nontrivial eigenvalues of $P(A, U B U^*)$ easily have multiple zero eigenvalues, which violates our assumption of simplicity. Therefore, we assume  in Theorem \ref{exam:A-UBU*} that $\widehat A$ and $\widehat B$ have \emph{only nonzero} eigenvalues and, instead, they are allowed to have different sizes, denoted $r$ and $s$ respectively. 
Then one sees that the matrix $P(A,UBU^*)$ has $r+s$ ``nontrivial eigenvalues''. 
\end{Rem}

When multiple eigenvalues appear in the limit, the situation is more complex than for models that have only simple eigenvalues.  We will study some typical phenomena through the specific model $A + U B U^*$. Striking features include: 
\begin{itemize}
\item fluctuations of a multiple eigenvalue may have different orders, see Example  \ref{exa:multipleB};  
\item fluctuations can be non-polynomial functions of standard Gaussian random vectors in contrast to the case of no multiplicities, see Examples \ref{exa:multipleA}, \ref{exa:multipleB}, \ref{exa:multipleC}, cf.\  Theorem \ref{thm:fluctuations}.     
\end{itemize}

This paper is organized as follows. 
In Section 2, we will present essential lemmas for obtaining fluctuations.
In Section 3, we prove the main theorems and provide the aforementioned examples $UA + AU^*$ and $P(A, UBU^{\ast})$.
In Section 4, we will examine the model $A + U B U^*$ that has eigenvalues with multiplicities.

\section{Technical tools}
Calculations of the fluctuations of eigenvalues are based on the following two facts. Let $\widehat{U} \equiv \widehat{U}^{(N)} = (u_{ij})_{i,j \in [r]} \equiv (u_{ij}^{(N)})_{i,j \in [r]}$  be the truncation of $U$. 

\begin{Lem}[{Theorem 4.2.1 and Proposition 4.4.1 in \cite{C03}}] \label{lem:TruncatedUnitary}
For $N \ge 2r$, $\widehat{U}^{(N)}$ has the probability density function
\[
c_{N,r} \det(1_r - AA^{*})^{N-2r} 1_{\|A\| \le 1}dA,
\]
where $c_{N,r}$ is a normalization constant and $dA$ is the Lebesgue measure on $M_r(\C)$. 
In particular, as $N$ tends to infinity,
the convergence in law  
\[
\sqrt{N}\widehat{U}^{(N)}  \longrightarrow  Z
\] holds,  where $Z = (z_{ij})_{i,j\in [r]}$ is a \emph{standard complex Gaussian random matrix}, i.e., $\{\Re(z_{ij}), \Im(z_{ij}):  i,j \in [r]\}$ are i.i.d.~random variables having normal distribution with mean 0 and variance $1/2$. 
\end{Lem}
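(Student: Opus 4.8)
The statement has two parts: the exact probability density of the $r\times r$ corner $\widehat U^{(N)}$, and the convergence of the rescaled corner $\sqrt N\,\widehat U^{(N)}$ to a standard complex Gaussian matrix. The plan is to prove the density formula by disintegrating the unitarily invariant measure on a Stiefel manifold, and then to deduce the convergence by rescaling inside that density and passing to the limit with a dominated-convergence (Scheff\'e-type) argument. The density part is exactly the content of the cited results of \cite{C03}, so the aim here is to reconstruct the argument rather than to claim originality.

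For the density, I would first replace $U$ by its first $r$ columns: by left-invariance of the Haar measure on $U(N)$, the $N\times r$ matrix $V$ formed by the first $r$ columns of $U$ is distributed according to the (unique) unitarily invariant probability measure on the complex Stiefel manifold $\mathcal V:=\{V\in M_{N\times r}(\C):V^*V=1_r\}$, and $\widehat U^{(N)}$ is the top $r\times r$ block of $V$. Write $V=\begin{pmatrix}A\\ B\end{pmatrix}$ with $A\in M_r(\C)$, $B\in M_{(N-r)\times r}(\C)$; the constraint reads $A^*A+B^*B=1_r$. On the open set $\{A:1_r-A^*A>0\}$, whose complement inside $\{\|A\|\le 1\}$ is the zero set of the real-analytic function $\det(1_r-A^*A)$ and hence Lebesgue-null, the fibre of the projection $V\mapsto A$ is $\{B:B^*B=1_r-A^*A\}=\{W(1_r-A^*A)^{1/2}:W\in\mathcal V'\}$, where $\mathcal V':=\{W\in M_{(N-r)\times r}(\C):W^*W=1_r\}$. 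Disintegrating the invariant measure on $\mathcal V$ along this projection, the key computation is the Jacobian of the change of variables $B\mapsto(A,W)$ (equivalently, the local product structure of $\mathcal V$ over the $A$-coordinate); the complex matrix-variate change of variables produces the factor $\det(1_r-A^*A)^{(N-r)-r}=\det(1_r-A^*A)^{N-2r}$, while integration of the $W$-variable over the compact manifold $\mathcal V'$ contributes a finite constant. Since $\det(1_r-A^*A)=\det(1_r-AA^*)$, this yields the density $c_{N,r}\det(1_r-AA^*)^{N-2r}1_{\|A\|\le 1}\,dA$, with $c_{N,r}$ the reciprocal of the resulting total mass.

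For the convergence, substitute $A=N^{-1/2}M$, so that $dA=N^{-r^2}\,dM$ and the density of $M=\sqrt N\,\widehat U^{(N)}$ equals $c_{N,r}N^{-r^2}\det(1_r-N^{-1}MM^*)^{N-2r}1_{\|M\|\le\sqrt N}$. For each fixed $M$ one has $(N-2r)\log\det(1_r-N^{-1}MM^*)=-\tfrac{N-2r}{N}\Tr(MM^*)+O(N^{-1})\to-\Tr(MM^*)$, so the factor $\det(1_r-N^{-1}MM^*)^{N-2r}$ converges pointwise to $\exp(-\Tr(MM^*))=\exp\big(-\sum_{i,j\in[r]}|m_{ij}|^2\big)$, which up to the constant $\pi^{-r^2}$ is exactly the density of the standard complex Gaussian matrix $Z$ described in the statement. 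To upgrade pointwise convergence of densities to convergence in law, I would dominate: on the support, applying $\log(1-x)\le-x$ for $x\in[0,1]$ to the eigenvalues of $N^{-1}MM^*\in[0,1]$ gives $\det(1_r-N^{-1}MM^*)^{N-2r}\le\exp\big(-\tfrac{N-2r}{N}\Tr(MM^*)\big)\le\exp\big(-\tfrac12\Tr(MM^*)\big)$ once $N\ge 4r$, an integrable dominating function independent of $N$. Dominated convergence then forces $c_{N,r}N^{-r^2}\to\pi^{-r^2}$ and $L^1$-convergence of the densities, hence convergence of $\sqrt N\,\widehat U^{(N)}$ to $Z$ in total variation, a fortiori in law. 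As an alternative route for this second part that sidesteps the Stiefel computation, one can instead compute the rescaled mixed moments $N^{n}\,\mathbb E\big[\prod_{k}u_{i_kj_k}\prod_{k}\overline{u_{i'_kj'_k}}\big]$ of the entries via the Weingarten formula, note that only the leading ``pair-matching'' terms survive in the $N\to\infty$ limit and reproduce the complex Gaussian moments, and conclude by the method of moments since the complex Gaussian is moment-determinate.

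The main obstacle is the Jacobian of the disintegration in the first part, namely checking that the passage $B\mapsto(A,W)$ on the Stiefel manifold produces precisely the exponent $N-2r$; this is the only genuinely computational step (it is also the computation appearing in the physics literature on truncated random unitaries). Everything else is routine: the reduction to the Stiefel manifold is immediate from left-invariance of Haar measure, and the limiting argument in the third paragraph is standard real analysis.
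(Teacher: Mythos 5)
The paper gives no proof of this lemma --- it is quoted verbatim from \cite{C03} --- and your reconstruction is correct and follows the standard route of that reference (disintegration of the invariant measure on the Stiefel manifold to get the exponent $N-2r$, then rescaling plus a Scheff\'e/dominated-convergence argument for the Gaussian limit). The only points worth flagging are cosmetic: the indicator $1_{\|A\|\le 1}$ in the statement is the operator-norm ball (your working set $\{1_r-A^*A>0\}$ is the right one), and the fiber-volume Jacobian $\det(1_r-A^*A)^{(N-r)-r}$ is asserted rather than derived, but you correctly identify it as the one genuinely computational step and state its value correctly.
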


\begin{Rem}\label{rem:skorohod}
According to the Skorohod representation theorem \cite[Theorem 6.7]{Bil99}, there exist  $r\times r$ random matrices $Y, V^{(N)}, N\in\N$ on some probability space such that $Y, V^{(N)}$ have the same distributions as $Z,\widehat{U}^{(N)}$, respectively, and that $\sqrt{N}V^{(N)}$ converges to $Y$ almost surely. Some arguments below (in particular in Section \ref{sec:multiple}) can be simplified by employing $Y$ and $V^{(N)}$. 
\end{Rem}

The previous lemma readily implies that $\widehat U^{(N)}$ itself converges to $0$ in probability.  More strongly, almost sure convergence holds. 

\begin{Lem} \label{lem:as}
 As $N\to\infty$, $\widehat{U}^{(N)}$ converges to $0$ a.s.
\end{Lem}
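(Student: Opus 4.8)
The plan is to deduce almost-sure convergence from the explicit density in Lemma \ref{lem:TruncatedUnitary} via a Borel--Cantelli argument, controlling the tail probability $\mathbb{P}(\|\widehat U^{(N)}\| > \varepsilon)$ and showing it is summable in $N$ for every fixed $\varepsilon>0$. Since $r$ is fixed and the events live on different probability spaces a priori, I would first fix a realization: either work directly with the law of $\widehat U^{(N)}$ (the statement is about convergence in distribution promoted to a.s.\ convergence, so strictly speaking one should phrase it on a common space, e.g.\ realize all $U^{(N)}$ from a single Haar-distributed infinite unitary, or invoke the Skorokhod coupling of Remark \ref{rem:skorohod}; for the clean a.s.\ statement the natural choice is the nested realization $\widehat U^{(N)}$ = top-left $r\times r$ corner of $U^{(N)}$ with all $U^{(N)}$ coupled, but for Borel--Cantelli only the marginal tail bound matters).

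The key computation is the tail estimate. Using the density $c_{N,r}\det(1-AA^*)^{N-2r}\mathbf 1_{\|A\|\le 1}\,dA$, I would bound
\[
\mathbb{P}\big(\|\widehat U^{(N)}\| > \varepsilon\big) = c_{N,r}\int_{\varepsilon < \|A\| \le 1} \det(1-AA^*)^{N-2r}\,dA.
\]
On the region $\|A\|>\varepsilon$, the largest singular value satisfies $\sigma_1(A)^2 > \varepsilon^2$, so $\det(1-AA^*) = \prod_i (1-\sigma_i(A)^2) \le 1-\varepsilon^2$, giving
\[
\mathbb{P}\big(\|\widehat U^{(N)}\| > \varepsilon\big) \le c_{N,r}\,(1-\varepsilon^2)^{N-2r}\,\mathrm{Vol}(\{\|A\|\le 1\}).
\]
To make this useful I need the growth rate of the normalization constant $c_{N,r}$. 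From $1 = c_{N,r}\int_{\|A\|\le 1}\det(1-AA^*)^{N-2r}\,dA$ and the fact that $\det(1-AA^*)^{N-2r}\le 1$ on the unit ball, one gets $c_{N,r}\ge 1/\mathrm{Vol}(\{\|A\|\le1\})$, but I need an \emph{upper} bound. Passing to singular values, the integral equals (up to a dimensional constant) $\int_{[0,1]^r}\prod_i(1-t_i^2)^{N-2r}\prod_{i<j}(t_i^2-t_j^2)^2\prod_i t_i\,dt$, and each one-dimensional factor $\int_0^1(1-t^2)^{N-2r}t^{2k+1}\,dt = \tfrac12 B(k+1,N-2r+1)$ is of order $N^{-(k+1)}$; hence $c_{N,r}$ grows only polynomially in $N$, say $c_{N,r}=O(N^{C(r)})$ for an explicit exponent. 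Then $\mathbb{P}(\|\widehat U^{(N)}\|>\varepsilon) = O\big(N^{C(r)}(1-\varepsilon^2)^{N}\big)$, which is summable in $N$. Borel--Cantelli gives $\|\widehat U^{(N)}\|\to 0$ a.s., and since all matrix norms on $M_r(\mathbb{C})$ are equivalent and $r$ is fixed, this is exactly $\widehat U^{(N)}\to 0$ a.s.

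The main obstacle is the bookkeeping on the normalization constant $c_{N,r}$: one must confirm it grows at most polynomially (equivalently, that the Selberg-type integral $\int_{\|A\|\le1}\det(1-AA^*)^{N-2r}\,dA$ decays only polynomially, not exponentially, in $N$). An alternative that sidesteps an explicit $c_{N,r}$ bound is to ratio-test against a reference dimension: since Lemma \ref{lem:TruncatedUnitary} already gives $\sqrt N\,\widehat U^{(N)}\to Z$ in law, one has $\|\widehat U^{(N)}\| = O_{\mathbb P}(N^{-1/2})$, and a slightly more quantitative version of the same density estimate — comparing the mass on $\{\|A\|> t/\sqrt N\}$ to total mass — yields $\mathbb{P}(\|\widehat U^{(N)}\|> N^{-1/2+\delta}) \le e^{-cN^{2\delta}}$ for any $\delta>0$, which is more than summable. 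Either route closes the argument; I would present the cleaner Borel--Cantelli version with the polynomial bound on $c_{N,r}$ spelled out just enough to be convincing.
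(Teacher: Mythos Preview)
Your argument is correct, but the paper's own proof is considerably shorter and takes a different route. Rather than working with the joint density of $\widehat U^{(N)}$ and estimating the normalization constant $c_{N,r}$, the paper simply quotes the moment formula $\mathbb{E}\big[|u_{ij}^{(N)}|^4\big] = \frac{2}{N(N+1)}$ for a single entry of a Haar unitary. Summability in $N$ then gives $\mathbb{E}\big[\sum_{N}|u_{ij}^{(N)}|^4\big] < \infty$, so the series converges almost surely and in particular $u_{ij}^{(N)} \to 0$ a.s.\ for each fixed $i,j\in[r]$; since $r$ is fixed this is the claim. No density, no $c_{N,r}$, no Selberg integral.

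Your approach is heavier but not without merit: it delivers an exponential tail bound $\mathbb{P}(\|\widehat U^{(N)}\| > \varepsilon) = O\big(N^{r^2}(1-\varepsilon^2)^N\big)$, which is strictly more information than the paper extracts. The cost is the bookkeeping on $c_{N,r}$ that you correctly flag as the main obstacle (your sketch via the singular-value integral is right; one finds $c_{N,r} \asymp N^{r^2}$). If you want the lemma as stated and nothing more, the fourth-moment argument is the economical choice; if you later need a rate, your density estimate is the one to keep.
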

\begin{proof} 
It is known that $\mathbb E[|u_{ij}^{(N)}|^4] = \frac{2}{N(N+1)}$, see e.g.\ \cite[p.\ 778]{CS06}. Taking the sum over $N$ implies that $\sum_{N=1}^\infty|u_{ij}^{(N)}|^4 $ has finite expectation and hence its value is finite almost surely.   
\end{proof}

\begin{Lem}[{\cite[Chapter 1, Section 4, Problem 1]{Bil68}}]   \label{lem:conv_law}
 Let $X_N, X, Y_N, Y$ be $\C$-valued random variables, $N\in\N$. 
If $X_N  \overset{\text{law}}{\longrightarrow} X$ and $Y_N  \overset{\text{prob}}{\longrightarrow} 0$ then $X_N + Y_N \overset{\text{law}}{\longrightarrow} X$ and $X_N Y_N  \overset{\text{prob}}{\longrightarrow}0$. 
\end{Lem}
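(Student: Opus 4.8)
The statement is the complex-valued version of Slutsky's theorem, so the plan is to adapt the standard real-variable argument. Throughout I would identify $\C$ with $\R^2$ and write $\langle\cdot,\cdot\rangle$ for the real inner product, so that characteristic functions and the L\'evy continuity theorem apply without change; the auxiliary variable $Y$ in the statement plays no role and can be ignored.

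For the additive assertion $X_N+Y_N\to X$ in law, the cleanest route is via characteristic functions. For fixed $t\in\R^2$ I would write
\[
\mathbb E\big[e^{i\langle t,\,X_N+Y_N\rangle}\big]=\mathbb E\big[e^{i\langle t,\,X_N\rangle}\big]+\mathbb E\big[e^{i\langle t,\,X_N\rangle}\big(e^{i\langle t,\,Y_N\rangle}-1\big)\big].
\]
The first term converges to $\mathbb E[e^{i\langle t,X\rangle}]$ because $X_N\to X$ in law. The second term is bounded in modulus by $\mathbb E\big[|e^{i\langle t,Y_N\rangle}-1|\big]$; since $Y_N\to 0$ in probability, $e^{i\langle t,Y_N\rangle}-1\to 0$ in probability, and being bounded by $2$ it converges to $0$ in $L^1$. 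Hence the characteristic function of $X_N+Y_N$ converges pointwise to that of $X$, and the L\'evy continuity theorem finishes the first claim. (Alternatively, one can avoid characteristic functions and test against a bounded uniformly continuous $f$, splitting $|\mathbb E[f(X_N+Y_N)]-\mathbb E[f(X)]|\le|\mathbb E[f(X_N+Y_N)]-\mathbb E[f(X_N)]|+|\mathbb E[f(X_N)]-\mathbb E[f(X)]|$ and bounding the first summand by $\varepsilon+2\|f\|_\infty\,\mathbb P(|Y_N|\ge\delta)$ through the modulus of continuity of $f$.)

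For the product assertion $X_N Y_N\to 0$ in probability, the key input is that a sequence converging in law is tight. Given $\varepsilon>0$, I would pick $M_0$ with $\mathbb P(|X|\ge M_0)<\varepsilon$; the portmanteau theorem applied to the closed set $\{|x|\ge M_0\}$ gives $\limsup_N\mathbb P(|X_N|\ge M_0)\le\mathbb P(|X|\ge M_0)<\varepsilon$, and enlarging $M_0$ to absorb the finitely many remaining indices yields an $M=M(\varepsilon)$ with $\sup_N\mathbb P(|X_N|\ge M)<\varepsilon$. Then for any $\eta>0$,
\[
\mathbb P(|X_N Y_N|\ge\eta)\le\mathbb P(|X_N|\ge M)+\mathbb P\!\left(|Y_N|\ge\tfrac{\eta}{M}\right)<\varepsilon+\mathbb P\!\left(|Y_N|\ge\tfrac{\eta}{M}\right),
\]
and letting $N\to\infty$ the last probability vanishes since $Y_N\to 0$ in probability, so $\limsup_N\mathbb P(|X_N Y_N|\ge\eta)\le\varepsilon$. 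As $\varepsilon$ is arbitrary the claim follows.

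There is no real obstacle in this proof. The only step requiring a little care is the tightness step for the product statement, namely extracting a uniform-in-$N$ bound on $\mathbb P(|X_N|\ge M)$ from convergence in law, and making sure the portmanteau estimate is combined with an enlargement of $M$ so that it covers \emph{all} indices rather than only large $N$. Everything else is a union bound together with bounded/dominated convergence.
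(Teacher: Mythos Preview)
Your argument is correct and entirely standard: the characteristic-function computation (or the alternative via bounded uniformly continuous test functions) cleanly handles the additive part, and the tightness-plus-union-bound argument handles the product part. The only minor quibble is that $\sup_N \mathbb P(|X_N|\ge M)$ might equal $\varepsilon$ rather than be strictly less, but this is irrelevant for the conclusion.

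As for comparison with the paper: there is nothing to compare. The paper does not prove this lemma at all; it simply cites it as \cite[Chapter~1, Section~4, Problem~1]{Bil68} and moves on. Your write-up thus supplies strictly more than the paper does, and the route you take---characteristic functions for the sum, tightness for the product---is exactly the classical proof one finds in standard references.
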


Below we denote $\|X\|:= \sqrt{\Tr[X^{*} X]}$ for $X \in M_r(\C)$. 

\begin{Lem}[Eigenvalues of perturbed matrices] \label{lem:distinct}
Let $r \ge 2$ and $\Lambda = \diag(\lambda_1,\lambda_2,\dots, \lambda_r) \in M_r(\C)$, where  $\lambda_1,\lambda_2,\dots, \lambda_r$ are distinct complex numbers. 
Then there exist homogeneous polynomials $\Pi_{p,k}(X)$ $(k\in\N, p\in[r])$ of degree $k$ on the $r^2$ complex variables $X=\{x_{ij}\}_{i,j\in[r]}$ and a constant $C>0$ such that for all $X \in M_r(\C)$ with $\|X\| <C$ the eigenvalues of the perturbed matrix $\Lambda + X$ can be expressed as the absolutely convergent series expansions
\begin{equation} \label{eq:ev_expansions}
\lambda_p  + \sum_{k=1}^\infty \Pi_{p,k}(X), \qquad p \in [r].   
\end{equation}
In particular, the first two terms $\Pi_{p,1}$ and $\Pi_{p,2}$ are given by
 \[\Pi_{p,1}(X) =  x_{pp}, \qquad \Pi_{p,2}(X) = \sum_{i\ne p} \frac{ x_{ip} x_{pi}}{\lambda_p - \lambda_i}. \]
\end{Lem}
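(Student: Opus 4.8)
The plan is to run the analytic perturbation theory of simple eigenvalues through the resolvent, which simultaneously produces the convergent series \eqref{eq:ev_expansions} and makes the first two coefficients computable by residues. Since $\lambda_1,\dots,\lambda_r$ are distinct, fix a radius $\delta>0$ with $3\delta<\min_{i\ne j}|\lambda_i-\lambda_j|$ and let $\gamma_p$ be the positively oriented circle of radius $\delta$ centered at $\lambda_p$; these circles are pairwise disjoint and $\gamma_p$ encloses $\lambda_p$ and no other $\lambda_q$. Set $C:=\delta$. For $z\in\gamma_p$ one has $\mathrm{dist}(z,\{\lambda_1,\dots,\lambda_r\})=\delta$, hence $\|(z-\Lambda)^{-1}\|_{\mathrm{op}}=\delta^{-1}$, and since $\|X\|_{\mathrm{op}}\le\|X\|<C=\delta$ the Neumann series
\[
(z-\Lambda-X)^{-1}=\sum_{k\ge0}(z-\Lambda)^{-1}\bigl(X(z-\Lambda)^{-1}\bigr)^{k}
\]
converges in operator norm, uniformly for $z\in\gamma_p$.

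Next I would record the standard facts. The Riesz projection $P_p(X):=\frac1{2\pi i}\oint_{\gamma_p}(z-\Lambda-X)^{-1}\,dz$ is an idempotent depending continuously on $X$; at $X=0$ it is the rank-one projection onto the $\lambda_p$-eigenspace, so $\Tr P_p(X)$, being continuous and integer-valued on the connected set $\{\|X\|<C\}$, equals $1$ throughout, and consequently $\Lambda+X$ has exactly one eigenvalue $\lambda_p(X)$ inside $\gamma_p$, with $(\Lambda+X)P_p(X)=\lambda_p(X)P_p(X)$. Using $(\Lambda+X)(z-\Lambda-X)^{-1}=z(z-\Lambda-X)^{-1}-1$, $\oint_{\gamma_p}dz=0$ and taking traces gives
\[
\lambda_p(X)=\frac1{2\pi i}\oint_{\gamma_p}z\,\Tr\bigl[(z-\Lambda-X)^{-1}\bigr]\,dz.
\]
Substituting the Neumann series and interchanging sum and integral (justified by the uniform convergence, the $k=0$ term integrating to $\lambda_p$) yields $\lambda_p(X)=\lambda_p+\sum_{k\ge1}\Pi_{p,k}(X)$ with
\[
\Pi_{p,k}(X):=\frac1{2\pi i}\oint_{\gamma_p}z\,\Tr\bigl[(z-\Lambda)^{-1}\bigl(X(z-\Lambda)^{-1}\bigr)^{k}\bigr]\,dz.
\]
Since $(z-\Lambda)^{-1}=\diag\bigl((z-\lambda_1)^{-1},\dots,(z-\lambda_r)^{-1}\bigr)$, the trace expands as a finite sum over index cycles $i_0\to i_1\to\cdots\to i_{k-1}\to i_0$ of the monomials $x_{i_0i_1}x_{i_1i_2}\cdots x_{i_{k-1}i_0}$ times a rational function of $z$ whose poles lie in $\{\lambda_1,\dots,\lambda_r\}$; integrating term by term shows $\Pi_{p,k}$ is a homogeneous polynomial of degree $k$ in the $x_{ij}$ with constant coefficients. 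Moreover $|\Tr[\,\cdot\,]|\le r\,\|\,\cdot\,\|_{\mathrm{op}}\le r\,\delta^{-(k+1)}\|X\|^{k}$ on $\gamma_p$, so $|\Pi_{p,k}(X)|\le r(|\lambda_p|+\delta)(\|X\|/\delta)^{k}$, which is summable for $\|X\|<C$; this gives the absolute convergence in \eqref{eq:ev_expansions}.

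Finally I would evaluate $\Pi_{p,1}$ and $\Pi_{p,2}$ by residue calculus. For $k=1$ the trace is $\sum_i x_{ii}(z-\lambda_i)^{-2}$; only the double pole at $z=\lambda_p$ lies inside $\gamma_p$ and $\mathrm{Res}_{z=\lambda_p}z\,x_{pp}(z-\lambda_p)^{-2}=x_{pp}$, so $\Pi_{p,1}(X)=x_{pp}$. For $k=2$ the trace is $\sum_{i,j}x_{ij}x_{ji}(z-\lambda_i)^{-2}(z-\lambda_j)^{-1}$; the diagonal term $i=j=p$ contributes $\mathrm{Res}_{z=\lambda_p}z\,x_{pp}^2(z-\lambda_p)^{-3}=0$, while for each $i\ne p$ the two terms with $(i,j)=(p,i)$ and $(i,j)=(i,p)$ together contribute $\mathrm{Res}_{z=\lambda_p}\bigl[z\,x_{pi}x_{ip}(z-\lambda_p)^{-2}(z-\lambda_i)^{-1}+z\,x_{ip}x_{pi}(z-\lambda_i)^{-2}(z-\lambda_p)^{-1}\bigr]=\frac{x_{ip}x_{pi}}{\lambda_p-\lambda_i}$, whence $\Pi_{p,2}(X)=\sum_{i\ne p}x_{ip}x_{pi}/(\lambda_p-\lambda_i)$. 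The only points requiring real care are the constancy of $\Tr P_p(X)$ (equivalently, that the enclosed eigenvalue stays simple for $\|X\|<C$) and the combination of the two residue contributions for $k=2$; everything else is routine, and alternatively one may quote Kato's analytic perturbation theory for the existence of the family $\lambda_p(X)$ and carry out only the Taylor-coefficient computation above.
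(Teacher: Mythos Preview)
Your proof is correct and complete, but it follows a genuinely different route from the paper's argument. The paper applies the holomorphic implicit function theorem directly to the characteristic polynomial $f(z,X)=\det(zI_r-(\Lambda+X))$: since $f(\lambda_p,0)=0$ and $\partial_z f(\lambda_p,0)\neq 0$ by simplicity, one obtains a holomorphic map $X\mapsto\mu_p^\Lambda(X)$ in a neighborhood of $0$, and its Taylor expansion furnishes the $\Pi_{p,k}$; the formulas for $\Pi_{p,1},\Pi_{p,2}$ are then extracted by differentiating the identity $f(\mu_p^\Lambda(X),X)=0$ at $X=0$.

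Your resolvent/Riesz-projection approach has the advantage of producing an explicit contour-integral formula for \emph{every} $\Pi_{p,k}$, together with a clean geometric bound $|\Pi_{p,k}(X)|\le r(|\lambda_p|+\delta)(\|X\|/\delta)^k$ that makes the radius of convergence transparent and is handy for the later regrouping step in the proof of Theorem~\ref{thm:fluctuations}. The paper's implicit-function approach, by contrast, is shorter and needs less machinery (no spectral projections, no continuity-of-rank argument), but gives only the abstract existence of the expansion and requires separate algebra for each coefficient. Both arguments are standard; yours is the one typically found in Kato-style perturbation theory, the paper's is the bare-hands several-complex-variables version.
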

\begin{proof} The function $f(z, X):= \det (z 1_r - (\Lambda + X))$ is a polynomial of $r^2+1$ variables, $f(\lambda_p, 0) = 0$ and $\partial_z f(\lambda_p, 0) \ne0$; the last condition holds by the assumption of simplicity. By the holomorphic implicit function theorem \cite[p.\ 34]{FG02}, there exist neighborhoods $U_p$ of $\lambda_p \in \C$ and $V_p$ of $0 \in M_r(\C)$ and holomorphic function $\mu^\Lambda_p\colon V_p \to U_p$ such that 
\[
\{(z,X) \in U_p \times V_p \mid f(z,X)=0\} = \{(\mu^\Lambda_p(X),X) \mid X \in V_p\}. 
\]
As being a holomorphic function of several variables, $\mu^\Lambda_p$ has an absolutely convergent series expansion in a neighborhood of $0$ and hence is of the form \eqref{eq:ev_expansions}, as desired.  

The formulas for $\Pi_{p,1}$ and $\Pi_{p,2}$ follow from straightforward calculus, i.e., taking partial derivatives in the identity $f(\mu^\Lambda_p(X), X)=0$ with respect to $x_{ij}$'s and evaluating at $X=0$ yields formulas for $\partial_{x_{ij}}\mu^\Lambda_p(0), \partial^2_{x_{ij}x_{k\ell}}\mu^\Lambda_p(0)$ for $i,j, k,\ell \in [r]$.  
\end{proof}


\section{Simple eigenvalues: a general algorithm and examples}


 \subsection{Algorithm and proofs of Theorems \ref{thm:converge.a.s.} and \ref{thm:fluctuations}} \label{sec:algorithm}

The algorithm for computing the fluctuations of eigenvalues of \eqref{eq:model} is what follows. We specialize in the case $k=\ell=1$, which lightens the notation but does not decrease the essence. Let $A:= A_1$ and $B:= B_1$.   
We first choose the basis (also regarded as an $N \times N$ matrix)
\begin{equation} \label{eq:basis}
\mathbf{B}:= (e_1, e_2, \dots, e_r, u_1, u_2, \dots, u_r, e_{r+1}, e_{r+2}, \dots, e_{N-r}),
\end{equation}
 where $u_i$ is the $i$-th column vector of $U^{(N)}$.  
 Note that $\mathbf{B}$ is a basis with probability one since the truncated Haar unitary $\widehat{U}$ has the continuous density in $M_{r}(\C)$ due to Theorem \ref{lem:TruncatedUnitary}  and the set of singular matrices is a null set with respect to Lebesgue measure.  


The matrix representations of $A U^*$ and $U B$ with respect to the basis $\mathbf B$ are given by 
\[
 A':=  \begin{pmatrix}
   \hA \hU^* & \hA & * \\
   0 & 0 & 0 \\ 
   0 & 0 & 0 \\  
       \end{pmatrix} 
       \qquad \text{and} \qquad 
B':=       \begin{pmatrix}
  0& 0 & 0 \\
   \hB &  \hB \hU & 0 \\ 
   0 & 0 & 0 \\  
       \end{pmatrix},      
\]
respectively. Let $P(x,y)$ be a polynomial without a constant term in noncommuting indeterminates $x,y$. Because $P(AU^*,U B) = \mathbf B P(A',B')\mathbf B^{-1}$, it suffices to compute the eigenvalues of the matrix $P(A',B')$ which is of the form  
\begin{equation} \label{eq:corner}
\underbrace{\begin{pmatrix}
   P_{11}(\hA, \hB) & P_{12}(\hA, \hB) & * \\
   P_{21}(\hA, \hB) & P_{22}(\hA, \hB) & * \\ 
   0 & 0 & 0 \\  
       \end{pmatrix}}_{=:M} 
       + 
      \underbrace{\begin{pmatrix} 
     O(\|\hU\|) &O(\|\hU\|)  & * \\   
   O(\|\hU\|) & O(\|\hU\|)  & * \\
   0 & 0 & 0 \\  
       \end{pmatrix} }_{=: V},  
\end{equation}
where $ P_{ij}(\hA, \hB)  ~(i,j =1,2)$ does not contain $\hU$. 
It suffices to work on the submatrix $\tilde M + \tilde V$ consisting of the first $2r$ row and columns of $M+V$.  The eigenvalues of $\tilde M + \tilde V$ are called the \emph{nontrivial eigenvalues} of $P(AU^*,U B)$.

\begin{proof}[Proof of Theorem \ref{thm:converge.a.s.}]The entries of the matrix $\tilde V$ are polynomials on entries of $\widehat A, \widehat B, \widehat U,\widehat U^*$ without a constant term with respect to  $\widehat U,\widehat U^*$, so that, by Lemma \ref{lem:as}, they converge to 0 almost surely.  This implies that the eigenvalues of $\tilde M + \tilde V$ converge to those of $\tilde M$, which can be easily proved by applying the argument principle in complex analysis to the characteristic polynomials (this is a simple case of Lemma \ref{lem:conv_poly} below where $d_N$ are all equal to $d=2r$).      
\end{proof}

The eigenvalues of $\tilde M$ and $\tilde M + \tilde V$ are denoted by $\{\mu_i\}_{i=1}^{2r}$ and $\{\mu_i^{(N)}\}_{i=1}^{2r}$ respectively according to the notation of Theorem \ref{thm:fluctuations}. 

\begin{proof}[Proof of Theorem \ref{thm:fluctuations}]
Since $\{\mu_i\}_{i=1}^{2r}$ have no multiplicities by the assumption,  there exists an invertible matrix $\tilde R$ of size $2r$ such that $\tilde R^{-1} \tilde M \tilde R= \diag(\mu_1,\mu_2,\dots, \mu_{2r})$. Apparently the eigenvalues of $\tilde M + \tilde V$ are exactly those of the matrix 
\[
\diag(\mu_1,\mu_2,\dots, \mu_{2r}) + \tilde R^{-1} \tilde V \tilde R. 
\]
Then Lemmas \ref{lem:TruncatedUnitary} and \ref{lem:distinct} lead to Theorem \ref{thm:fluctuations} as desired. Indeed, Lemma \ref{lem:distinct} yields
\[
\mu_i^{(N)} = \mu_i  + \sum_{k=1}^\infty \Pi_{i,k}( \tilde R^{-1} \tilde V \tilde R),
\]
which absolutely converges for sufficiently large $N$ since $\tilde V \to 0$ a.s. The RHS is a power series on variables $\{u_{ij}, \overline{u_{ij}} \mid i,j\in[r]\}$ and can be regrouped into 
\[
\mu_i^{(N)} = \mu_i  + \sum_{k=1}^\infty Q_{i,k}(\widehat U, \widehat U^*),  
\]
where $Q_{i,k}(X, X^*)$ is a homogeneous polynomial of degree $k$ on commuting indeterminates $X=\{x_{ij}\}_{i,j\in[r]}$ and $X^*=\{\overline{x_{ji}}\}_{i,j\in[r]}$. 
We set 
\begin{equation}\label{eq:exponent2}
\kappa_i := \inf \{k\in\N \mid Q_{i,k}(X, X^*) \ne0\}.
\end{equation}
 If $\kappa_i=\infty$ then $\mu_i^{(N)} = \mu_i$ a.s.  If $\kappa_i <\infty$ then we can easily prove by Lemma \ref{lem:conv_law} that $N^{\frac{\kappa_i}{2}}(\mu_i^{(N)} -\mu_i)$ converges in law to $Q_{i,\kappa_i}(Z,Z^*)$, where $Z$ is a standard complex Gaussian matrix. Moreover, this convergence holds jointly for all $i$ for which $\kappa_i <\infty$. Note that $Q_{i,\kappa_i}(Z,Z^*)$ is a nonzero random variable; indeed, because the set $S:= \{X \in M_r(\C)\mid Q_{i,\kappa_i}(X,X^*)=0\}$ is a null set with respect to the Lebesgue measure and $Z$ has a probability density function, the probability of the event $Z \in S$ is zero. This implies that for all $\kappa> \kappa_i$, $N^{\frac{\kappa}{2}}(\mu_i^{(N)} -\mu_i)$ does not converge in law and hence the definitions \eqref{eq:exponent1} and \eqref{eq:exponent2} coincide. 
\end{proof}



\subsection{The case  $UA+AU^*$}

\begin{Prop}\label{prop:UA+AU*}
The $2r$ nontrivial eigenvalues of the matrix
\begin{align*}
U A + A U^*, 
\end{align*}
where 
\[
 A = 
\begin{pmatrix}  \hA & 0  \\
   0 & 0 \\ 
\end{pmatrix}  \in M_N(\C)
  \quad \text{and} \quad      
 \hA  = \diag(\alpha_1,\alpha_2, \dots, \alpha_r), \quad \alpha_1, \alpha_2,\dots, \alpha_r \in \C,
\]
converge to $\{ \alpha_{i}, -\alpha_i\}_{i=1}^r$ a.s. In addition, if these $2r$ limiting numbers are all distinct (which implies that they are nonzero) then the nontrivial eigenvalues of $U A + A U^*$ are of the forms 
\[
\alpha_i + \frac{\alpha_i}{\sqrt{N}} \Re[\sqrt{N}u_{ii}^{(N)}] + \frac{\omega_{i,+}^{(N)}}{N} \qquad \text{and}\qquad - \alpha_i + \frac{ \alpha_i}{\sqrt{N}} \Re[\sqrt{N}u_{ii}^{(N)}] + \frac{\omega_{i,-}^{(N)}}{N}, \qquad i \in [r], 
\]
where $\{\omega_{i,\pm}^{(N)}\}_{i=1}^r$ are random variables converging in law to $\C$-valued random variables.  See Figure \ref{fig:H5} for a simulation. 
\end{Prop}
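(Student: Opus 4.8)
The plan is to apply the general algorithm of Subsection \ref{sec:algorithm} to the specific polynomial $P(x,y) = y + x$ with $\hA = \hB = \widehat{A} = \diag(\alpha_1,\dots,\alpha_r)$, so that $A U^* + U A$ plays the role of the model \eqref{eq:model} (with the roles of $AU^*$ and $UB$ in \eqref{eq:corner} matched by taking $B = A$ in the second slot). First I would write down the $2r \times 2r$ matrix $\tilde M + \tilde V$ explicitly. From the matrix representations $A'$ and $B'$ in the basis $\mathbf B$, one computes that the leading $2r\times 2r$ block of $P(A',B') = A' + B'$ has
\[
\tilde M = \begin{pmatrix} 0 & \widehat A \\ \widehat A & 0 \end{pmatrix}, \qquad \tilde V = \begin{pmatrix} \widehat A \widehat U^* & 0 \\ 0 & \widehat A \widehat U \end{pmatrix},
\]
so that $\tilde M + \tilde V = \begin{pmatrix} \widehat A \widehat U^* & \widehat A \\ \widehat A & \widehat A \widehat U \end{pmatrix}$. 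Theorem \ref{thm:converge.a.s.} already gives a.s.\ convergence of the $2r$ nontrivial eigenvalues to those of $\tilde M$; diagonalizing the $2\times 2$ blocks shows these are exactly $\{\alpha_i, -\alpha_i\}_{i=1}^r$, which proves the first assertion.

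Next, assuming the $2r$ limits are distinct, I would produce the diagonalizing matrix $\tilde R$ with $\tilde R^{-1}\tilde M\tilde R = \diag(\alpha_1,\dots,\alpha_r,-\alpha_1,\dots,-\alpha_r)$; concretely $\tilde R$ is built from the $2\times 2$ eigenvectors $\tfrac{1}{\sqrt 2}\binom{1}{1}$ and $\tfrac{1}{\sqrt 2}\binom{1}{-1}$ suitably permuted, and $\tilde R^{-1}\tilde V\tilde R$ is then computed by direct block algebra in terms of $\widehat U, \widehat U^*$. By Lemma \ref{lem:distinct} applied to $\Lambda = \diag(\alpha_1,\dots,-\alpha_r)$ and the perturbation $X = \tilde R^{-1}\tilde V\tilde R$, each nontrivial eigenvalue has the absolutely convergent expansion $\mu + \sum_{k\ge 1}\Pi_{\cdot,k}(X)$. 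The key computation is to read off the first-order term $\Pi_{\cdot,1}(X) = x_{pp}$, i.e.\ the corresponding diagonal entry of $\tilde R^{-1}\tilde V\tilde R$. Writing out the block product, the diagonal entry of $\tilde R^{-1}\tilde V\tilde R$ in the eigendirection for $+\alpha_i$ should come out to $\tfrac{\alpha_i}{2}(u_{ii} + \overline{u_{ii}}) = \alpha_i\,\Re(u_{ii})$, and likewise $\alpha_i\,\Re(u_{ii})$ for $-\alpha_i$ — this explains the appearance of $\Re[\sqrt N u_{ii}^{(N)}]$ with the \emph{same} coefficient $\alpha_i$ in both eigenvalues. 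Hence $\kappa_i = 1$ for all $i$ (one must check the first-order term is a.s.\ nonzero, which follows as in the proof of Theorem \ref{thm:fluctuations} since $\Re(u_{ii}) = 0$ is a null event), and the remaining terms $\sum_{k\ge 2}\Pi_{\cdot,k}(X)$ are $O(\|\widehat U\|^2) = O(1/N)$ in the appropriate sense; collecting them into $\omega_{i,\pm}^{(N)}/N$, with $\omega_{i,\pm}^{(N)} = N\sum_{k\ge 2}\Pi_{\cdot,k}(X)$ converging in law by Lemmas \ref{lem:TruncatedUnitary} and \ref{lem:conv_law}, gives the claimed form. That $\Re[\sqrt N u_{ii}^{(N)}]$ converges to a real Gaussian is immediate from Lemma \ref{lem:TruncatedUnitary}, so the fluctuations are (real) normal.

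The main obstacle is the bookkeeping in the block conjugation $\tilde R^{-1}\tilde V\tilde R$: one has to be careful that the permutation interleaving the $\{\alpha_i\}$ and $\{-\alpha_i\}$ eigendirections is tracked correctly so that the diagonal entries are extracted from the right positions, and that cross terms between the $+\alpha_i$ and $-\alpha_i$ sectors (which are nonzero in $\tilde R^{-1}\tilde V\tilde R$) do not contribute at first order — they only enter $\Pi_{\cdot,2}$ and beyond, hence are absorbed into $\omega_{i,\pm}^{(N)}/N$. A minor additional point is justifying that the second- and higher-order contributions, though involving off-diagonal entries $u_{ij}$ with $i \ne j$, are genuinely $O(1/N)$ after multiplying by $\sqrt N$ twice; this is exactly the content of Lemma \ref{lem:conv_law} combined with the homogeneity of the $\Pi_{\cdot,k}$, so no new idea is needed beyond what is already used in the proof of Theorem \ref{thm:fluctuations}.
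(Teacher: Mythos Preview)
Your proposal is correct and follows essentially the same route as the paper: write the $2r\times 2r$ block as $\tilde M+\tilde V$ with $\tilde M=\begin{pmatrix}0&\hA\\\hA&0\end{pmatrix}$ and $\tilde V=\begin{pmatrix}\hA\hU^*&0\\0&\hA\hU\end{pmatrix}$, conjugate by the block matrix $\tilde R=\frac{1}{\sqrt2}\begin{pmatrix}1_r&1_r\\1_r&-1_r\end{pmatrix}$ to obtain $\diag(\hA,-\hA)+\begin{pmatrix}\hA\frac{\hU+\hU^*}{2}&\hA\frac{\hU^*-\hU}{2}\\\hA\frac{\hU^*-\hU}{2}&\hA\frac{\hU+\hU^*}{2}\end{pmatrix}$, and read off the diagonal entries $\alpha_i\Re(u_{ii})$ via Lemma~\ref{lem:distinct}. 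The ``permutation interleaving'' you worry about is not needed---the block form already places the $\{\alpha_i\}$ and $\{-\alpha_i\}$ in separate $r\times r$ blocks, so the bookkeeping is straightforward.
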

\begin{proof}

With respect to the basis $\mathbf B$ introduced in \eqref{eq:basis}, the matrix $U A + A U^*$ has the matrix representation 
\[
T :=  \begin{pmatrix}
   \hA \hU^* & \hA & * \\
     \hA &  \hA \hU & 0 \\ 
   0 & 0 & 0 \\  
       \end{pmatrix} 
     =    \begin{pmatrix}
  0& \hA & 0 \\
     \hA & 0 & 0 \\ 
   0 & 0 & 0 \\  
       \end{pmatrix} 
       +
    \begin{pmatrix}
   \hA \hU^* &  0   & * \\
      0&  \hA \hU & 0 \\ 
   0 & 0 & 0 \\  
       \end{pmatrix}.  
     \]
The main part of $T$ can be diagonalized by the orthogonal matrix     
     \[
R:=      \begin{pmatrix}
   \frac1{\sqrt{2}}1_r & \frac1{\sqrt{2}}1_r & 0 \\
    \frac1{\sqrt{2}}1_r&  - \frac1{\sqrt{2}}1_r & 0 \\ 
   0 & 0 & 1_{N-2r} \\  
       \end{pmatrix}
     \]
in such a way that 
\[
R^{-1} T R = \begin{pmatrix}
   \hA & 0 & 0 \\
     0 & -  \hA & 0 \\ 
   0 & 0 & 0 \\  
       \end{pmatrix} 
        +  
        \begin{pmatrix}
  \hA  \frac{\hU + \hU^*}{2} &  \hA  \frac{ \hU^* - \hU}{2} & * \\
    \hA  \frac{ \hU^* - \hU}{2}  &   \hA  \frac{\hU + \hU^*}{2} & * \\ 
   0 & 0 & 0 \\  
       \end{pmatrix}. 
\]
Suppose further that the $2r$ numbers $\pm \alpha_1, \pm \alpha_2, \dots, \pm\alpha_r$ are all distinct. As a consequence of Lemma \ref{lem:distinct}, the nontrivial eigenvalues of $T$, denoted by $\mu_i^{(N)}, \nu_i^{(N)}, i \in [r],$ are of the forms 
\begin{align*}
\mu_i^{(N)} 
&= \alpha_i + \frac1{2} (\hA  (\hU + \hU^*) )_{i i} + O(\|\hU\|^2) = \alpha_i +  \alpha_i\Re(u_{i i})  + O(\|\hU\|^2), \qquad i \in [r], \\
\nu_i^{(N)} 
&= - \alpha_i  + \alpha_i  \Re(u_{i i})  + O(\|\hU\|^2), \qquad i \in [r]. 
\end{align*}
\end{proof}
\begin{Rem} 
With the help of Lemma \ref{lem:TruncatedUnitary} and Lemma \ref{lem:conv_law}, this proposition implies that,  as $N\to\infty$,  
\[
\sqrt{N}(\mu_i^{(N)}  - \alpha_i)  \overset{\text{law}}{\longrightarrow} \frac{\alpha_i}{\sqrt{2}} x_i  \quad \text{and} \quad \sqrt{N}(\nu_i^{(N)}  + \alpha_i)  \overset{\text{law}}{\longrightarrow}  \frac{\alpha_i}{\sqrt{2}}  x_i \quad \text{for all} \quad i\in [r],     
\]  
where $\{x_i\}_{i=1}^r$ are iid random variables, each distributed as $N(0,1)$. 

\end{Rem}

\begin{figure}[h!]
\begin{center}
\begin{overpic}[width=7cm]{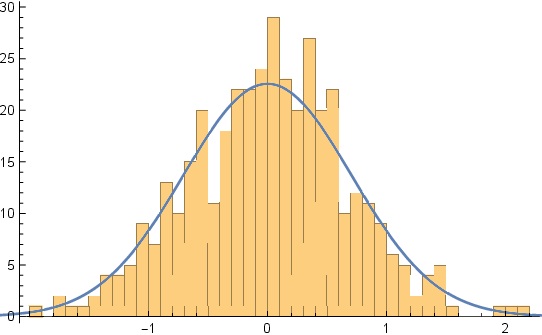}
\end{overpic}
\end{center}
\caption{A histogram for $\sqrt{N}(\mu_1^{(N)} - 4)/4$ (made of 400 samples), where $\mu_1^{(N)}$ is the eigenvalue near 4 of the matrix model $AU + U^*A $ with $A= \diag(4,2,1, 0,0,\dots,0)$ of size  $N=400$. The appended curve is the probability density function of $N(0,1)$ multiplied by $40$.} \label{fig:H5}
\end{figure}

\subsection{The case $P(A,UBU^*)$ and a proof of Theorem \ref{exam:A-UBU*}}

The specialized model $P(A,UBU^*)$ is easier to analyze than \eqref{eq:model} because the main part of the representation matrix is already diagonalized; see \eqref{eq:pol} below.  In this subsection we modify the definition of $\hU$ to the rectangular truncation $(u_{ij})_{i\in [r], j\in[s]}$.


\begin{proof}[Proof of Theorem \ref{exam:A-UBU*}]
Straightforward calculations yield that, with respect to the modified basis 
\[
(e_1, e_2, \dots, e_r, u_1, u_2, \dots, u_s, e_{r+1}, e_{r+2}, \dots, e_{N-s}), 
\] 
 $A$ and $UBU^*$ have the matrix representations    \begin{align*}
  \tA:=   \begin{pmatrix}
   \hA & \hA \hU & 0 \\
   0 & 0 & 0 \\ 
   0 & 0 & 0 \\  
       \end{pmatrix} 
       \qquad \text{and}\qquad 
\tB:=     \begin{pmatrix}
     0&0&0 \\
   \hB \hU^* & \hB  & \ast \\ 
   0 & 0 & 0 \\  
       \end{pmatrix}, 
  \end{align*}
 respectively, where \[
\hA := \diag(\alpha_1,\alpha_2,\dots, \alpha_r)   \in M_r(\C), \qquad \hB := \diag(\beta_1,\beta_2,\dots, \beta_s)   \in M_s(\C). 
\]   
From this it is clear that our model $P(A, U B U^*) $ has $N-r-s$ trivial zero eigenvalues. 

The polynomial $P(x,y)$ can be decomposed into 
\begin{align*}
P(x,y) &= P_1(x) + Q_1(y) + \underbrace{\sum_{k,\ell \ge1} a_{k,\ell} x^k y^\ell}_{=:P_2(x,y)} + \underbrace{\sum_{k,\ell \ge1} b_{k,\ell} y^\ell x^k}_{=:Q_2(x,y)}  \\
&\quad +  \underbrace{\sum_{k,\ell,m \ge1} c_{k,\ell,m} x^k y^\ell x^m}_{=:P_3(x,y)}+  \underbrace{\sum_{k,\ell,m \ge1} d_{k,\ell,m} y^\ell x^k y^m}_{=: Q_3(x,y)}  + R(x,y),  
\end{align*}
where $b_{k,\ell}, c_{k,\ell,m}, d_{k,\ell,m}$ are complex coefficients, and $R$ is a linear combination of monomials of lengths larger than three (the length of the elements $x^k$ and $y^\ell$ is counted as one). 
For $k,\ell, m \ge 1$ we have 
\begin{align*}
& \tA^k =  
 \begin{pmatrix} 
\hA^k & \hA^k \hU & 0 \\
   0 & 0 & 0 \\ 
   0 & 0 & 0 \\  
 \end{pmatrix},  \qquad  
 \tB^\ell = 
   \begin{pmatrix} 
        0 & 0 & 0 \\   
     \hB^\ell \hU^*&  \hB^\ell & \ast \\
   0 & 0 & 0 \\  
       \end{pmatrix}, \\
& \tA^k \tB^\ell =  
 \begin{pmatrix} 
\hA^k \hU \hB^\ell \hU^* & \hA^k \hU \hB^\ell & \ast \\
   0 & 0 & 0 \\ 
   0 & 0 & 0 \\  
 \end{pmatrix},  \qquad  
 \tB^\ell \tA^k = 
   \begin{pmatrix} 
        0 & 0 & 0 \\   
     \hB^\ell \hU^* \hA^k &  \hB^\ell \hU^* \hA^k \hU & 0 \\
   0 & 0 & 0 \\  
       \end{pmatrix}, \\
 &   \tA^k \tB^\ell \tA^m =  
 \begin{pmatrix} 
\hA^k \hU \hB^\ell \hU^* \hA^m &\hA^k \hU \hB^\ell \hU^* \hA^m\hU  & 0 \\
   0 & 0 & 0 \\ 
   0 & 0 & 0 \\  
 \end{pmatrix},  \\ 
 &
 \tB^\ell \tA^k \tB^m = 
   \begin{pmatrix} 
        0 & 0 & 0 \\   
     \hB^\ell \hU^* \hA^k \hU \hB^m  \hU^* &   \hB^\ell \hU^* \hA^k \hU \hB^m   & \ast \\
   0 & 0 & 0 \\  
       \end{pmatrix}.   
       \end{align*}
 With the convention that $c_{k,\ell,0} = a_{k,\ell}, d_{k,\ell,0}= b_{k,\ell}$ and $P_1(x) = \sum_{k\ge1} a_{k,0}x^k, Q_1(y) =\sum_{\ell \ge1} b_{0,\ell} y^\ell$  
 we get 
   \begin{align}
  P(\tA,\tB) 
  &=  
  \begin{pmatrix} 
        P_1(\hA) & P_1(\hA)\hU & 0 \\   
    Q_1 (\hB) \hU^*   & Q_1(\hB)  & \ast \\
   0 & 0 & 0 \\  
  \end{pmatrix} 
  + 
   \sum_{k,\ell \ge1}  
   \begin{pmatrix} 
   a_{k,\ell}   \hA^k \hU \hB^\ell \hU^* &a_{k,\ell}  \hA^k \hU \hB^\ell  & \ast \\   
   b_{k,\ell}   \hB^\ell \hU^* \hA^k  &  b_{k,\ell} \hB^\ell \hU^* \hA^k \hU  & \ast \\
   0 & 0 & 0 \\  
  \end{pmatrix}   
  \notag  \\ 
  & \qquad 
  + \sum_{k,\ell,m\ge1}
  \begin{pmatrix} 
     c_{k,\ell,m}\hA^k \hU \hB^\ell \hU^* \hA^m & 0  & 0 \\   
   0  & d_{k,\ell,m} \hB^\ell \hU^* \hA^k \hU \hB^m  & \ast \\
   0 & 0 & 0 \\  
  \end{pmatrix} 
  \notag \\
   &\qquad + 
  \begin{pmatrix} 
     O(\|\hU\|^{3}) &O(\|\hU\|^{2})  & \ast \\   
   O(\|\hU\|^{2}) & O(\|\hU\|^{3})  & \ast \\
   0 & 0 & 0 \\  
       \end{pmatrix}  \notag   \\ 
   &=   \begin{pmatrix} 
        P_1(\hA) & 0& 0 \\   
  0 & Q_1(\hB)  & \ast \\
   0 & 0 & 0 \\  
       \end{pmatrix} 
       + 
   \sum_{k\ge1, \ell \ge0}   
       \begin{pmatrix} 
    0 & a_{k,\ell}\hA^k \hU \hB^\ell  & \ast \\   
     0  & 0  & \ast \\
   0 & 0 & 0 \\  
       \end{pmatrix} 
       +   
   \sum_{k\ge0 ,\ell \ge1}    \begin{pmatrix} 
    0 & 0& \ast \\   
     b_{k,\ell}\hB^\ell \hU^* \hA^k  & 0  & \ast \\
   0 & 0 & 0 \\  
       \end{pmatrix}    
       \notag    \\ 
  & \qquad +  \sum_{k,\ell\ge1, m\ge0}\begin{pmatrix} 
     c_{k,\ell,m}\hA^k \hU \hB^\ell \hU^* \hA^m & 0  & 0 \\   
   0  & d_{k,\ell,m} \hB^\ell \hU^* \hA^k \hU \hB^m  & \ast \\
   0 & 0 & 0 \\  
       \end{pmatrix} 
      \notag \\ 
      &\qquad 
       + 
   \begin{pmatrix} 
     O(\|\hU\|^{3}) &O(\|\hU\|^{2})  & \ast \\   
   O(\|\hU\|^{2}) & O(\|\hU\|^{3})  & \ast \\
   0 & 0 & 0 \\  
       \end{pmatrix} 
       \label{eq:pol}
    \end{align}
Suppose that the $r+s$ eigenvalues of the main part 
\[
 \begin{pmatrix} 
        P_1(\hA) & 0 \\   
    0   & Q_1(\hB)    
       \end{pmatrix} 
       \]
are all distinct. Note that these eigenvalues are $P_1(\alpha_i), i \in[r]$ and $Q_1(\beta_j), j\in[s]$.  According to Lemma \ref{lem:distinct} the eigenvalues $\mu^{(N)}_1,\mu^{(N)}_2,\dots, \mu^{(N)}_{r+s}$ of the first $(r+s)$-dimensional corner of $P(\tA,\tB)$ (called the nontrivial eigenvalues) are of the form 
\begin{align}
\mu^{(N)}_i 
&= P_1(\alpha_i) +  \sum_{k,\ell\ge1, m\ge0}  c_{k,\ell,m}  (\hA^k \hU \hB^\ell \hU^* \hA^m)_{i,i}    \notag \\
& \qquad+   \sum_{j \in[s]}\frac{1}{P_1(\alpha_i) - Q_1(\beta_j)} \sum_{k\ge1,\ell\ge0} a_{k,\ell} (\hA^k  \hU \hB^\ell )_{i,j}  \sum_{k'\ge0,\ell'\ge1} b_{k',\ell'} (\hB^{\ell'}  \hU^* \hA^{k'} )_{j,i}  + O(\|\hU\|^3)  \notag \\
&= P_1(\alpha_i)+   \sum_{k,\ell\ge1, m\ge0}  c_{k,\ell,m} \sum_{j\in[s]} \alpha_i^{k+m}\beta_j^\ell |u_{i,j}|^2    \notag  \\
& \qquad+   \sum_{j \in[s]}\frac{1}{P_1(\alpha_i) - Q_1(\beta_j)} \sum_{k\ge1,\ell\ge0} a_{k,\ell} \sum_{k'\ge0,\ell'\ge1} b_{k',\ell'}  \alpha_i^{k+k'} \beta_j^{\ell+\ell'} |u_{i,j}|^2+ O(\|\hU\|^3)     \notag \\
&= P_1(\alpha_i) +    \sum_{j\in[s]} \left( P_2(\alpha_i,\beta_j) + P_3(\alpha_i,\beta_j) +  \frac{[P_1(\alpha_i) + P_2(\alpha_i,\beta_j)]  [Q_1(\beta_j) + Q_2(\alpha_i,\beta_j)]}{P_1(\alpha_i) - Q_1(\beta_j)} \right)  |u_{i,j}|^2    \notag   \\
&\qquad + O(\|\hU\|^3)   \label{eq:mixture_exponential1}
\end{align}
for $i\in[r]$, and similarly,  
\begin{align}
\mu^{(N)}_{r+j} 
&= Q_1(\beta_j) +    \sum_{i\in[r]} \left( Q_2(\alpha_i,\beta_j) + Q_3(\alpha_i,\beta_j) -  \frac{[P_1(\alpha_i) + P_2(\alpha_i,\beta_j)]  [Q_1(\beta_j) + Q_2(\alpha_i,\beta_j)]}{P_1(\alpha_i) - Q_1(\beta_j)} \right)  |u_{i,j}|^2  \notag  \\ 
&\qquad + O(\|\hU\|^3)  \label{eq:mixture_exponential2}
\end{align}
for $j \in[s]$.  
The random variables $N|u_{ij}^{(N)}|^2$ converge in law to $(\Re(z_{ij}))^2 + (\Im(z_{ij}))^2$ which follows the exponential distribution $e^{-x}\,dx, x>0$.
\end{proof}

\begin{figure}[h!]
\begin{center}
\begin{overpic}[width=7cm]{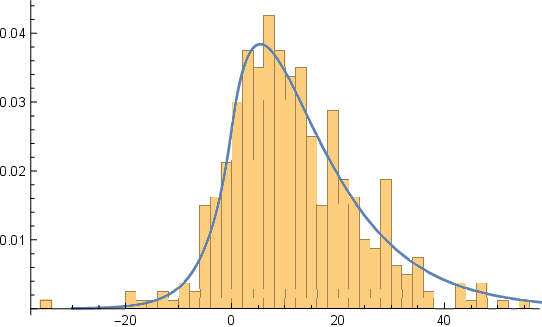}
\end{overpic}
\end{center}
\caption{A histogram for $N(\mu_2^{(N)} - 2)$  (400 samples, normalized to have area 1), where $\mu_2^{(N)}$ is the eigenvalue near $2$ of the model $A + UBU^* +  AUBU^*A + UBU^*AUBU^*$ with $A= \diag(5,2,1, 0,0,\dots,0), B= \diag(4,3,-1,0,0,\dots,0)$ of size $N=400$, together with the theoretical limiting probability density function $\frac{21}{800}e^{3x/14} 1_{(-\infty,0)}(x) + \frac{3}{800}(-25e^{-x/6} + 32 e^{-x/12}) 1_{[0,\infty)}(x)$. } \label{fig:H1}
\end{figure}

\begin{figure}[h!]
\begin{center}
\begin{overpic}[width=7cm]{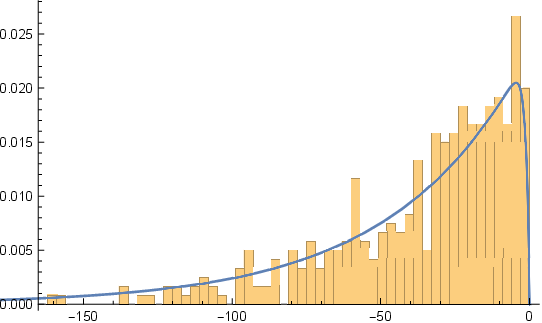}
\end{overpic}
\end{center}
\caption{A histogram for $N(\mu_1^{(N)} - 2)$ (400 samples, normalized to have area 1), where $\mu_1^{(N)}$ is the eigenvalue near 2 of the model $A + UBU^* +  AUBU^*  + UBU^*A   +  \frac{1}{2}(AUBU^*A + UBU^*AUBU^*)$ with $A= \diag(2,1,-1, 0,0,\dots,0), B= \diag(4,-0.2,0,0,\dots,0)$ of size $N=400$, together with the theoretical limiting probability density function $\frac{55}{2352}(e^{x/44} -e^{55x/68}) 1_{(-\infty,0)}(x)$. } \label{fig:H2}
\end{figure}

\section{Eigenvalues with multiplicities: examples} \label{sec:multiple}

When the main term of \eqref{eq:corner} has multiple eigenvalues, a general algorithm for computing fluctuations would be complicated (Lemma \ref{lem:distinct} works only for simple eigenvalues). Abandoning the general case, we work with the specific model
\begin{equation}\label{eq:sum_model}
X = A + U B U^*, 
\end{equation}
where 
 \begin{align*}
A &:= \diag(\alpha_1,\alpha_2,\dots, \alpha_r, 0,0,\dots,0)  \in M_N(\R),  \\
 B&:= \diag(\beta_1,\beta_2,\dots, \beta_s, 0,0,\dots,0)   \in M_N(\R), \\
 & \alpha_i,\beta_j \in \R\setminus\{0\}, i \in [r], j \in [s].  
\end{align*}
It is easy to see (e.g., \ from \eqref{eq:repr1}) that the limiting eigenvalues of $X$ are $\alpha_i, \beta_j, i \in [r], j\in [s]$ and the others are all zero. 

Even for this specific model, a general algorithm for computing fluctuations looks too difficult. We deal with further special cases.

\subsection{Convergence of polynomials and convergence of roots}

Our analysis of the fluctuations of eigenvalues of the sum model \eqref{eq:sum_model} is based on the characteristic polynomials. 
The following fact is essential to deal with eigenvalues with multiplicities and is a simple consequence of the argument principle in complex analysis. 

\begin{Lem}\label{lem:conv_poly}
 Let $P(z), P_N(z), N \in\N$ be polynomials with complex coefficients such that $P\not\equiv 0$. Let $d:= \deg P(z), d_N := \deg P_N(z)$ and assume that $\sup_{N\in\N} d_N<\infty$. We  denote by $\lambda_i, i\in [d]$ the roots of $P(z)$ counting multiplicities. Suppose that $P_N$ converges to $P$ pointwisely on $\C$. Then $d_N \ge d $ for sufficiently large $N$ and there is a suitable  labeling of the roots of $P_N(z)$ counting multiplicities, denoted by $\lambda_i^{(N)}, i \in[d_N]$, such that 
 \begin{enumerate}[label=\rm(\roman*),leftmargin=1cm]
\item\label{item1} $\displaystyle\lim_{N\to\infty} \lambda_i^{(N)} = \lambda_i$ for all  $i \in [d]$, 
\item\label{item2} $\displaystyle\lim_{N\to\infty} \max_{d+1 \le i \le d_N}|\lambda_i^{(N)}| =\infty$. 
\end{enumerate} 
When $P$ is a nonzero constant, we understand that $d=0$ and assertion \ref{item1} must be deleted. On the other hand, when $d_N = d$ then the number $\max_{d+1 \le i \le d_N}|\lambda_i^{(N)}| $ is to be interpreted as $\infty.$ 
\end{Lem}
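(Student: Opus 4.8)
The plan is to rely on the argument principle applied to the polynomials $P_N$ and $P$. First I would normalize: write $P(z) = c \prod_{i=1}^d (z - \lambda_i)$ with $c \ne 0$, and let $D := \sup_N d_N < \infty$, so each $P_N$ has degree $d_N \le D$. Fix $\varepsilon > 0$ small enough that the closed disks $\overline{B(\lambda_i,\varepsilon)}$, after grouping equal $\lambda_i$'s, are pairwise disjoint, and let $\gamma_i$ be the boundary circle of the disk around the distinct value (of multiplicity $m_i$, say). On the compact set $\gamma_i$ the polynomial $P$ has a positive minimum modulus $\delta_i > 0$; since $P_N \to P$ pointwise and the $P_N$ have uniformly bounded degree, the convergence is in fact uniform on every compact subset of $\C$ (a finite-dimensional linear-algebra fact: pointwise convergence of the coefficient vectors follows from evaluating at $D+1$ fixed points and inverting a Vandermonde matrix, hence one gets uniform convergence on compacts). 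Therefore for $N$ large, $|P_N - P| < \delta_i$ on $\gamma_i$, and by Rouché's theorem $P_N$ has exactly $m_i$ roots (counted with multiplicity) inside $B(\lambda_i,\varepsilon)$. Summing over the distinct values gives that $P_N$ has at least $d$ roots in $\bigcup_i B(\lambda_i,\varepsilon)$; in particular $d_N \ge d$ for large $N$. Letting $\varepsilon \downarrow 0$ along a sequence and using a diagonal argument, we can label $d$ of the roots of $P_N$ as $\lambda_1^{(N)},\dots,\lambda_d^{(N)}$ (respecting multiplicities of the limit) so that $\lambda_i^{(N)} \to \lambda_i$, which is assertion \ref{item1}.

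For assertion \ref{item2}, I would argue by contradiction: suppose the remaining roots $\lambda_{d+1}^{(N)},\dots,\lambda_{d_N}^{(N)}$ do not escape to infinity, i.e.\ there is a subsequence $N_k$ and a radius $\rho$ with $\max_{d+1 \le i \le d_{N_k}} |\lambda_i^{(N_k)}| \le \rho$. Along this subsequence, passing to a further subsequence, $d_{N_k}$ is constant, equal to some $d' \ge d$, and the $d' - d$ ``extra'' roots converge (by compactness in $\overline{B(0,\rho)}$) to some points $\mu_1,\dots,\mu_{d'-d}$. The leading coefficients: here one must be a bit careful. If $d_{N_k} > d$ for all $k$, then since all $d_{N_k}$ roots stay in a fixed disk, the polynomials $P_{N_k}$, written in factored form, would converge to a polynomial of degree $d'$ provided the leading coefficients converge to a nonzero limit; but pointwise convergence to $P$ forces the degree-$d'$ polynomial to equal $P$, a contradiction with $d' > d$ unless the leading coefficient of $P_{N_k}$ tends to $0$. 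The leading coefficient tending to $0$ while all roots stay bounded would force $P_{N_k} \to 0$ uniformly on compacts, contradicting $P \not\equiv 0$. Hence $d_{N_k} = d$ for large $k$, which is precisely the exceptional case flagged in the statement where the maximum is interpreted as $\infty$; so outside that case \ref{item2} holds.

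The main obstacle I anticipate is the bookkeeping around the leading coefficients and the degree of $P_N$: pointwise convergence $P_N \to P$ does not by itself pin down $d_N$, and one genuinely needs the uniform-bound hypothesis $\sup_N d_N < \infty$ together with the observation that "extra" roots either run off to infinity or cause the leading coefficient to collapse. The cleanest way to package this is to phrase everything in terms of the coefficient vectors in $\C^{D+1}$: pointwise convergence of $P_N$ at $D+1$ distinct points is equivalent to convergence of the coefficient vectors (via a fixed invertible Vandermonde matrix), which simultaneously yields uniform convergence on compacts and controls the behavior of the top coefficient. With that reformulation in hand, Rouché gives \ref{item1} and a short compactness-plus-contradiction argument gives \ref{item2}, with the two degenerate conventions in the statement ($P$ constant, and $d_N = d$) falling out naturally as the boundary cases of the argument.
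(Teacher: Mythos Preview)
Your proposal is correct and aligns with the paper's own treatment: the paper does not give a detailed proof of this lemma, stating only that it is ``a simple consequence of the argument principle in complex analysis.'' Your use of Rouch\'e's theorem (via coefficient convergence from the Vandermonde trick) for \ref{item1} and the compactness-plus-leading-coefficient contradiction for \ref{item2} is exactly the kind of argument the paper has in mind, and your handling of the two boundary conventions is accurate.
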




\begin{Exam} Let 
\[
P_N(x) = \frac{(-1)^N}{N} x^2 + \left(1-\frac{(-1)^N}{N^2}\right)x - \frac1{N} = \frac1{N}((-1)^N x + N)\left(x-\frac1{N}\right).
\] Then $P_N(x)$ converges to $P(x)=x$ pointwise. The root $x=(-1)^{N-1}N$ tends to $\pm\infty$, while the root $x=\frac1{N}$ converges to $0$ which is the root of $P(x)$. 
\end{Exam}

\subsection{The characteristic polynomial}

\begin{Notation*}
For an $m \times n$ matrix $C=(c_{ij})_{i \in [m], j \in [n]}$ and two subsets $I \subseteq [m], J \subseteq [n]$ of the same cardinality we let $[C]_{I,J}$ be the determinant of the submatrix $(c_{ij})_{i \in I, j \in J}$. As convention, we also set $[C]_{\emptyset, \emptyset}:=1$.  
\end{Notation*}

According to Example \ref{exam:A-UBU*}, with respect to the basis 
\[
(e_1, e_2, \dots, e_r, u_1, u_2, \dots, u_s, e_{r+1}, e_{r+2}, \dots, e_{N-s})
\]
the matrix $X$ in \eqref{eq:sum_model} has the representation matrix  
    \begin{align} \label{eq:repr1}
   \begin{pmatrix}
   \hA & \hA \hU & 0 \\
  \hB \hU^* & \hB  & \ast \\ 
   0 & 0 & 0 \\  
       \end{pmatrix},  
   \end{align}
where  $\hA = \diag(\alpha_1 ,\alpha_2, \dots, \alpha_r)$,  $\hB= \diag (\beta_1,\beta_2,\dots, \beta_s)$ and $\hU = (u_{ij})_{i \in [r], j\in [s]}$. Let $  \widetilde X$ be the submatrix of \eqref{eq:repr1} consisting of the first $(r+s)$ rows and columns. 
The characteristic polynomial of  $\widetilde X$ is given by 
\begin{align*}
\varphi_N(\lambda):=\sum_{n=0}^{\min\{r,s\}}(-1)^n \sum_{\substack{I \subseteq [r], J \subseteq [s] \\ \# I = \# J = n}} \left\{\prod_{i \in [r]\setminus I} (\lambda-\alpha_i) \prod_{j \in [s]\setminus J}(\lambda-\beta_j)  \prod_{i\in I} \alpha_i \prod_{j\in J} \beta_j \right\} \left|[\widehat{U}]_{I,J}\right|^2,  
\end{align*}
which is a direct consequence of the definition of determinant by prescribing the fixed points of permutations.  Investigating this polynomial will reveal fluctuations of the eigenvalues. 

\subsection{Multiplicities within $A$ (and/or within $B$)} \label{sec:multiple1}
Suppose first that $\alpha_1 = \alpha_2 = \cdots = \alpha_\mm$ for some $\mm \in [r]$ and that none of $\alpha_{\mm+1}, \alpha_{\mm+2}, \dots, \alpha_r$, $ \beta_1, \dots, \beta_s$ equals $\alpha_1$.


\subcases{Convergence of rescaled characteristic polynomial.}
By taking the Skorohod representation, we assume for the moment that $\widehat U^{(N)}$ converges to $Z$ almost surely, see Remark \ref{rem:skorohod}. 
(This replacement will be justified later.)
We begin by observing 
\begin{align*}
&N^\mm \varphi_N\left(\alpha_1 + \frac1{N}\tau\right)   \\
& = \tau^\mm \prod_{i=\mm+1}^r(\alpha_1-\alpha_i) \prod_{j=1}^s(\alpha_1 - \beta_j) \\
&\quad + \sum_{n=1}^{\min\{\mm,s\}}(-1)^n \sum_{\substack{I \subseteq [\mm], J\subseteq [s] \\ \#I = \#J = n}} \tau^{\mm-n} \left[\prod_{i=\mm+1}^r(\alpha_1-\alpha_i) \prod_{j\in [s]\setminus J} (\alpha_1-\beta_j) \prod_{i \in I}\alpha_i \prod_{j \in J}\beta_j \right] N^n \left| [\widehat{U}]_{I,J} \right|^2 \\
&\quad + O\left(\frac1{N}\right),
\end{align*}
where $O\left(\frac1{N}\right)$ is a polynomial on $\tau$ of degree not larger than $r+s$ with coefficients of order $O\left(\frac1{N}\right)$ in the usual sense almost surely. 
Since  $N^m\varphi_N\left(\alpha_1 + \frac1{N}\tau\right)$ converges almost surely to  the polynomial
\[
\psi(\tau):=\tau^\mm  + \sum_{n=1}^{\min\{\mm,s\}}(-1)^n \sum_{\substack{I \subseteq [\mm], J\subseteq [s] \\ \#I = \#J = n}} \tau^{\mm-n} \left[\prod_{j \in J} \frac{\alpha_1\beta_j}{\alpha_1-\beta_j} \right] \left| [Z]_{I,J} \right|^2,  
\]
by Lemma \ref{lem:conv_poly}, the polynomial $\tau\mapsto \varphi_N\left(\alpha_1 + \frac1{N}\tau\right)$ has $\mm$ consecutive roots $\delta_{11}^{(N)}\ge  \delta_{12}^{(N)}\ge  \cdots\ge \delta_{1\mm}^{(N)}$ that converge almost surely to the $\mm$ real roots of $\psi$. 

The roots of $\psi$ can be well described as the eigenvalues of a certain random matrix. 
Let $\gamma_{1j}:=\frac{\alpha_1\beta_j}{\alpha_1-\beta_j} $, $\Gamma_1 := \diag(\gamma_{11},\gamma_{12}, \dots, \gamma_{1s})$ and $Z_1=(z_{ij})_{i \in [\mm], j \in [s]}$ be the truncation of $Z$. Then we have 
\begin{align*}
\psi(\tau)
&=\tau^\mm  + \sum_{n=1}^{\min\{\mm,s\}}(-1)^n \tau^{\mm-n} \sum_{\substack{I \subseteq [\mm], J\subseteq [s] \\ \#I = \#J = n}}  [Z \Gamma_1]_{I,J} [Z^*]_{J,I} \\
&= \tau^\mm  + \sum_{n=1}^{\mm} \tau^{\mm-n} (-1)^n\sum_{\substack{I \subseteq [\mm]\\ \#I = n}}  [Z \Gamma_1Z^*]_{I,I} \\
&= \tau^\mm  +\sum_{n=1}^{\mm} \tau^{\mm-n} (-1)^n\sum_{\substack{I \subseteq [\mm]\\ \#I = n}}  [Z_1\Gamma_1Z_1^*]_{I,I} \\
&= \det(\tau I_\mm - Z_1 \Gamma_1 Z_1^*).   
\end{align*}
Note that $[Z \Gamma_1Z^*]_{I,I}=0$ if $s<n =\#I$ because the rank of $Z \Gamma_1Z^*$ is not greater than $s$ and hence we were allowed to replace $\min\{\mm,s\}$ with $\mm$. 

From the discussions above the random vector $(\delta_{11}^{(N)}, \delta_{12}^{(N)}, \dots, \delta_{1\mm}^{(N)})$ converges almost surely to the sequence of the eigenvalues of $Z_1 \Gamma_1 Z_1^*$ (labeled in the decreasing order), and hence, converges in law. 

\subcases{Conclusion.}
The convergence in law of the random vector $(\delta_{11}^{(N)}, \delta_{12}^{(N)}, \dots, \delta_{1\mm}^{(N)})$ also holds for the original random matrix model (without taking the Skorohod representation) because the roots of polynomials can be expressed as measurable (in fact, continuous) functions of the coefficients as a consequence of the argument principle so that each $\delta_{1j}^{(N)}$ is a measurable function of $\widehat U^{(N)}$. Convergence in law is a notion completely determined by the law and hence is unchanged by replacing the random variables with other ones with identical laws. 

\begin{Exam} \label{exa:multipleA}
The preceding arguments allow us to calculate the joint distribution of fluctuations when the entries of $A$ and $B$ are of the form 
\begin{align*}
(\alpha_1,\alpha_2,\dots, \alpha_r)&= (\underbrace{\alpha_{1}', \dots, \alpha_{1}'}_{\text{$m_1$ times}}, \underbrace{\alpha_{2}',\dots, \alpha_{2}'}_{\text{$m_2$ times}}, \dots, \underbrace{\alpha_{p}',\dots, \alpha_{p}'}_{\text{$m_p$ times}}), \\
(\beta_1,\beta_2,\dots, \beta_s)&= (\underbrace{\beta_{1}', \dots, \beta_{1}'}_{\text{$n_1$ times}}, \underbrace{\beta_{2}',\dots, \beta_{2}'}_{\text{$n_2$ times}}, \dots, \underbrace{\beta_{q}',\dots, \beta_{q}'}_{\text{$n_q$ times}}), 
\end{align*} where $\alpha_1' \dots, \alpha_p', \beta_1', \dots, \beta_q'$ are distinct. Without loss of generality, we assume that $\alpha_1' > \alpha_2' > \cdots > \alpha_p'$ and similarly $\beta_1' > \beta_2' > \cdots > \beta_q'$. 

Let $\Gamma_k = \diag(\gamma_{k1}, \dots, \gamma_{k s})$ and $H_\ell=\diag(\eta_{1\ell}, \eta_{2\ell,} \dots, \eta_{r\ell})$, where 
\[
\gamma_{kj} = \frac{\alpha_k'\beta_j}{\alpha_k'-\beta_j} \qquad \text{and} \qquad \eta_{i\ell}=\frac{\alpha_i\beta_\ell'}{\alpha_i-\beta_\ell'}
\]
 and let $Z_k, Y_\ell$ be the $m_k \times s$ and $r \times n_\ell$ submatrices of $Z$, respectively, defined by 
\[
Z= \begin{pmatrix} Z_1 \\ Z_2 \\ \vdots \\ Z_p
\end{pmatrix}
= (Y_1,Y_2,\dots, Y_q). 
\]
  Let $\{\rho_{k1}, \rho_{k2}, \dots, \rho_{k, m_k}\}$ and $\{\sigma_{1\ell}, \sigma_{2\ell}, \dots, \sigma_{m_\ell, \ell}\}$ be the eigenvalues of $Z_k \Gamma_k Z_k^*$ and of $Y_\ell^* H_\ell Y_\ell$, respectively, labeled in the decreasing order.   Then the eigenvalues of  $\widetilde X$ are of the form $\alpha_k' + \frac1{N}\delta_{k j}^{(N)}~(j\in[m_k], k \in [p])$ and $\beta_\ell' + \frac1{N}\epsilon_{i\ell}^{(N)} (i \in [n_\ell], \ell\in [q])$ with $\delta_{k 1}^{(N)}\ge \delta_{k 2}^{(N)} \ge \cdots \ge \delta_{k, m_k}^{(N)}$ and  $\epsilon_{1\ell}^{(N)} \ge \epsilon_{2\ell}^{(N)} \ge \cdots \ge \epsilon_{n_\ell,\ell}^{(N)}$ such that  
  \[
\left(  (\delta_{kj}^{(N)})_{j \in [m_k],k \in [p]}, (\epsilon_{i\ell}^{(N)})_{i \in [n_\ell], \ell \in [q]} \right)  \overset{\text{law}}{\longrightarrow}   \left( (\rho_{kj})_{ j \in [m_k],k \in [p]},   (\sigma_{i\ell})_{i \in [n_\ell], \ell \in [q]} \right) 
  \] as random vectors on $\R^{m_1}_\ge \times \R^{m_2}_\ge \times\cdots \times \R^{m_p}_\ge  \times \R^{n_1}_\ge \times \R^{n_2}_\ge \times \cdots \times \R^{n_q}_\ge $. See Figure \ref{fig:H12} for simulations of fluctuations. 
  
 \end{Exam}

 \begin{figure}[h!]
\begin{center}
\begin{minipage}{0.3\hsize}
\begin{center}
\begin{overpic}[width=5cm]{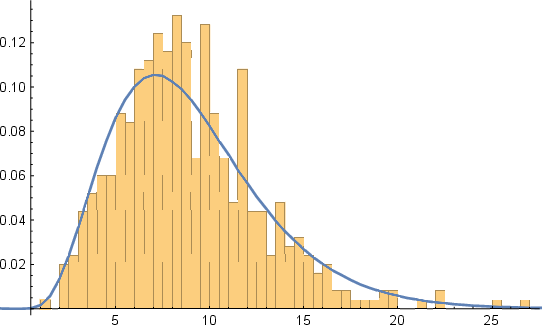}
\end{overpic}
\end{center}
\end{minipage}
\hspace{0mm}
\begin{minipage}{0.3\hsize}
\begin{center}
\begin{overpic}[width=5cm]{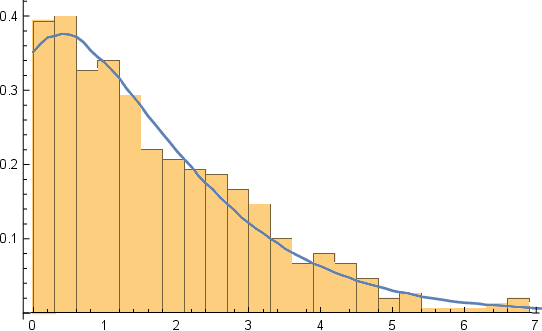}
\end{overpic}
\end{center}
\end{minipage}
\hspace{0mm}
\begin{minipage}{0.3\hsize}
\begin{center}
\begin{overpic}[width=5cm]{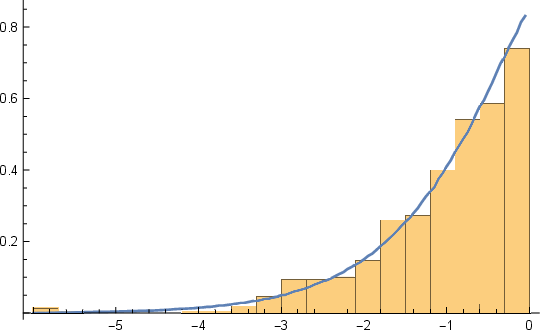}
\end{overpic}
\end{center}
\end{minipage}
\caption{Histograms for $\delta_{1,1}^{(N)}$ (left), $\delta_{1,2}^{(N)}$ (middle) and  $\delta_{1,3}^{(N)}$ (right)  in Example \ref{exa:multipleA} for the model $A+ UBU^*$ with $A= \diag(2,2, 2,0,\dots,0), B= \diag(1,1,-1,0,\dots,0)$ of size $N=400$. The histograms are constructed from 500 samples, and the heights are normalized to have area 1. The appended curves are the probability density functions of $\rho_{1,1}$ (left), $\rho_{1,2}$ (middle), $\rho_{1,3}$ (right), drawn by taking $2\cdot 10^6$ samples and connecting (by line segments) the heights of the histogram.   
} \label{fig:H12}
\end{center}
\end{figure}


\subsection{Common eigenvalues shared by $A$ and $B$}
We assume that  $r=s=2$ and discuss the case where some $\alpha_i$ coincides with some $\beta_j$. The fluctuations are more exotic. 

\begin{Exam} \label{exa:multipleB}
 Suppose that $\alpha_1=\alpha_2= \beta_1 \ne \beta_2$. Without loss of generality, we assume that $\alpha_1 < \beta_2$.  
The characteristic polynomial of $\widetilde X$ is explicitly given by 
\begin{align*}
\varphi_N(\lambda)
&= (\lambda -\alpha_1)^3 (\lambda-\beta_2) - \alpha_1^2 (|u_{11}|^2 +|u_{21}|^2)(\lambda-\alpha_1)(\lambda-\beta_2) \\
&\qquad - \alpha_1\beta_2 (|u_{12}|^2 +|u_{22}|^2)(\lambda-\alpha_1)^2 +\alpha_1^3 \beta_2 |u_{11}u_{22}- u_{12}u_{21}|^2. 
\end{align*}
  Analogously to Section \ref{sec:multiple1}, we  assume for the moment that $\sqrt{N}\widehat{U}^{(N)}$ converges to $Z$ almost surely.

\subcases{Fluctuations of $\alpha_1$.}  Unexpectedly, there are two different scalings. 
Observe first that 
\begin{align}
N^{\frac{3}{2}}\varphi_N\left(\alpha_1 + \frac1{\sqrt{N}}\tau\right)
&=  \tau^3(\alpha_1-\beta_2) - \alpha_1^2 N(|u_{11}|^2 +|u_{21}|^2)\tau(\alpha_1-\beta_2)  + O\left(\frac1{\sqrt{N}}\right), \label{eq:limit2}
\end{align}
which reveals that $\varphi_N(\alpha_1 + \frac1{\sqrt{N}}\tau)$ has two roots $\tau=\delta_1^{(N)}, \delta_2^{(N)}$ that respectively converge almost surely to 
\[
\xi_1:=|\alpha_1|\sqrt{|z_{11}|^2+|z_{21}|^2} \qquad \text{and} \qquad \xi_2:=- |\alpha_1|\sqrt{|z_{11}|^2+|z_{21}|^2}. 
\]
In addition to $\xi_1, \xi_2$, the limiting polynomial of \eqref{eq:limit2} has the root $0$ with multiplicity one. This means that the scaling $1/\sqrt{N}$ is irrelevant for the third root near $\alpha_1$. The right scaling is $N^{-1}$ as we see from 
\begin{align*}
N^2\varphi_N\left(\alpha_1 + \frac1{N}\tau\right)
&=  - \alpha_1^2 N(|u_{11}|^2 +|u_{21}|^2)\tau(\alpha_1-\beta_2) +\alpha_1^3 \beta_2 N^2|u_{11}u_{22}- u_{12}u_{21}|^2 + O\left(\frac1{N}\right) \\
& \to  - \alpha_1^2 (|z_{11}|^2 +|z_{21}|^2)\tau(\alpha_1-\beta_2) +\alpha_1^3 \beta_2 |z_{11}z_{22}- z_{12} z_{21}|^2.  
\end{align*}
Again by Lemma \ref{lem:conv_poly},  the polynomial $\tau\mapsto\varphi_N(\alpha_1 + \frac1{N}\tau)$ has a root  $\tau=\delta_3^{(N)}$ that converges almost surely to the random variable
\[
\xi_3:=\frac{\alpha_1\beta_2}{\alpha_1-\beta_2}\cdot \frac{|z_{11}z_{22}-z_{12}z_{21}|^2}{|z_{11}|^2+|z_{21}|^2}. 
\]
\subcases{Fluctuations of $\beta_2$.}
A similar analysis yields that $\varphi_N$ has a root of the form $\lambda =\beta_2 + \frac1{N}\epsilon^{(N)}$ such that $\epsilon^{(N)}$ converges almost surely to 
\[
\zeta:=\frac{\alpha_1\beta_2}{\beta_2-\alpha_1}(|z_{12}|^2+|z_{22}|^2).  
\]
\subcases{Conclusion.}  Considering the scaling and the signs of limiting random variables, for sufficiently large $N$ (depending on samples),  we have $\beta_2 + \frac1{N}\epsilon^{(N)} >  \alpha_1 + \frac1{\sqrt{N}} \delta_1^{(N)} >  \alpha_1 + \frac1{N} \delta_3^{(N)}  > \alpha_1 + \frac1{\sqrt{N}} \delta_2^{(N)}. $
  Let $\lambda_1^{(N)}\ge\lambda_2^{(N)}\ge\lambda_3^{(N)}\ge\lambda_4^{(N)}$ be the four eigenvalues of $\widetilde X$. We conclude that the random vector
\[
\left(  N (\lambda_1^{(N)}  -\beta_2),    \sqrt{N} (\lambda_2^{(N)}  -\alpha_1),  N (\lambda_3^{(N)}  -\alpha_1),    \sqrt{N} (\lambda_4^{(N)}  -\alpha_1)  \right)  
\] 
converges in law to 
\[
\left(     \frac{\alpha_1\beta_2}{\beta_2-\alpha_1}(|z_{12}|^2+|z_{22}|^2) ,  |\alpha_1|\sqrt{|z_{11}|^2+|z_{21}|^2},    \frac{-\alpha_1\beta_2  |z_{11}z_{22}-z_{12}z_{21}|^2}{(\beta_2-\alpha_1)(|z_{11}|^2+|z_{21}|^2)},  -  |\alpha_1|\sqrt{|z_{11}|^2+|z_{21}|^2}  \right).
\] 
This also holds without taking the Skorohod representation from the corresponding reasoning in Section \ref{sec:multiple1}.  See Figure \ref{fig:H8} for simulations. 
\begin{figure}[h!]
\begin{center}
\begin{minipage}{0.3\hsize}
\begin{center}
\begin{overpic}[width=5cm]{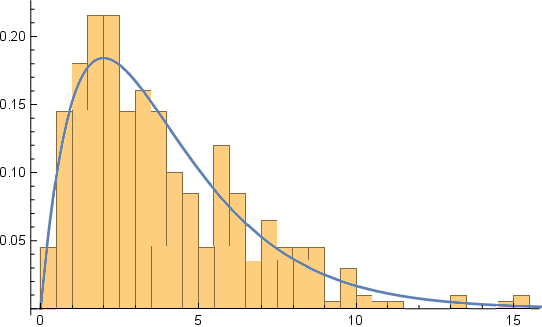}
\end{overpic}
\end{center}
\end{minipage}
\hspace{0mm}
\begin{minipage}{0.3\hsize}
\begin{center}
\begin{overpic}[width=5cm]{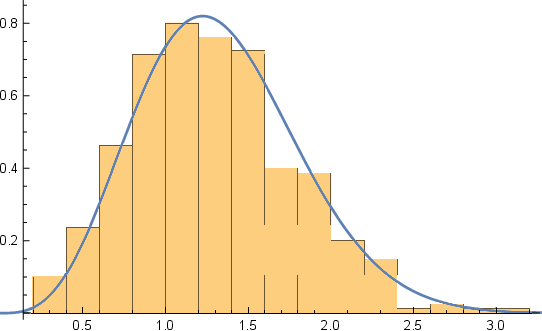}
\end{overpic}
\end{center}
\end{minipage}
\hspace{0mm}
\begin{minipage}{0.3\hsize}
\begin{center}
\begin{overpic}[width=5cm]{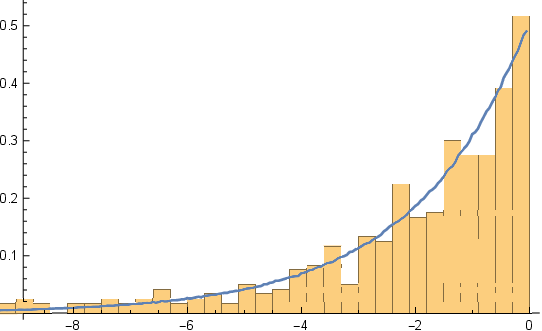}
\end{overpic}
\end{center}
\end{minipage}
\caption{Histograms for $N (\lambda_1^{(N)}  -2)$ (left),  $\sqrt{N} (\lambda_2^{(N)}  -1)$ (middle),   $ N (\lambda_3^{(N)}  - 1)$ (right) in Example \ref{exa:multipleB}  for the matrices $A= \diag(1,1, 0,0,\dots,0), B= \diag(1,2,0,0,\dots,0)$ of size $N=400$.  The histograms are constructed from 400 samples, and the heights are normalized to have area 1. The appended  curves are the probability density functions of $2(|z_{12}|^2+|z_{22}|^2)$ (left,  $(1/4) x e^{-x/2}, x>0$),  $\sqrt{|z_{11}|^2+|z_{21}|^2}$ (middle, $2 x^3 e^{-x^2}, x>0$), $\frac{-2 |z_{11}z_{22}-z_{12}z_{21}|^2}{|z_{11}|^2+|z_{21}|^2}$ (right, drawn by taking $2\cdot 10^6$ samples and connecting the heights of the histogram).} 
 \label{fig:H8}
\end{center}
\end{figure}

\end{Exam}

\begin{Exam} \label{exa:multipleC}
 Suppose that $\alpha_1= \beta_1 < \alpha_2= \beta_2$.  Let $\lambda_1^{(N)}\ge\lambda_2^{(N)}\ge\lambda_3^{(N)}\ge\lambda_4^{(N)}$ be the four eigenvalues of $\widetilde X$. 
A similar technique reveals that the random vector
\[
\left(   \sqrt{N} (\lambda_1^{(N)}  -\alpha_2),      \sqrt{N} (\lambda_2^{(N)}  -\alpha_2),   \sqrt{N} (\lambda_3^{(N)}  -\alpha_1),  \sqrt{N} (\lambda_4^{(N)}  -\alpha_1)  \right)  
\] 
converges in law to 
\[
(|\alpha_2 z_{22}|, -|\alpha_2 z_{22}|,|\alpha_1 z_{11}|, - |\alpha_1 z_{11}|). 
\]
See Figure \ref{fig:H14} for a simulation. 
\begin{figure}[h!]
\begin{center}
\begin{overpic}[width=7cm]{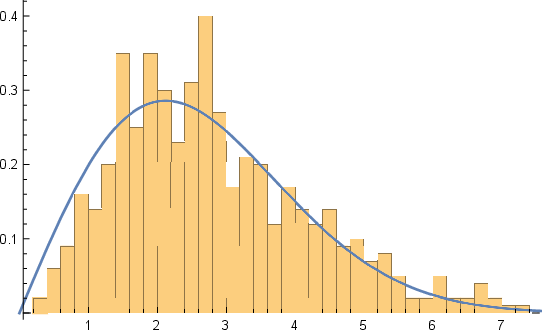}
\end{overpic}
\end{center}
\caption{A histogram for $ \sqrt{N} (\lambda_1^{(N)}  - 3)$ in  Example \ref{exa:multipleC} for the matrices $A= B= \diag (2,3,0,0,\dots,0)$ of size $N=400$,   together with the probability density function of $3 |z_{22}|$ ($(2/9) x e^{-x^2/9}, x>0$). The histogram is constructed from 500 samples, and the heights are normalized to have area 1.} \label{fig:H14}
\end{figure}

\end{Exam}

\section*{Acknowledgements}
The inserted figures are drawn on Mathematica Version 12.1.1, Wolfram Research, Inc., Champaign, IL. This work is supported by JSPS Grant-in-Aid for Young Scientists 19K14546, Transformative Research Areas (B) grant no.~23H03800, Scientific Research (B) no.~21H00987, Challenging Research (Exploratory)no.~23K17299, Scientific Research (C) no.~23K03133, and JSPS Open Partnership Joint Research Projects grant no.~JPJSBP120209921.
KF was supported by Hokkaido University Ambitious Doctoral Fellowship (Information Science and AI).
FL was supported by the SFB-TRR 195 `Symbolic Tools in Mathematics and their Application' of the German Research Foundation (DFG).


\vspace{4mm}

\begin{flushleft}

Beno\^{i}t Collins 

Kyoto University

Department of Mathematics, Graduate School of Science 

Kitashirakawa Oiwake-cho, Sakyo-ku, Kyoto 606-8502  Japan

Email address: collins@math.kyoto-u.ac.jp

\vspace{4mm}

Katsunori Fujie

Hokkaido University

Department of Mathematics 

North 10 West 8, Kita-ku, Sapporo 060-0810, Japan 

Email address: kfujie@eis.hokudai.ac.jp

\vspace{4mm}

Takahiro Hasebe

Hokkaido University

Department of Mathematics 

North 10 West 8, Kita-ku, Sapporo 060-0810, Japan 

Email address: thasebe@math.sci.hokudai.ac.jp

\vspace{4mm}

Felix Leid 

Saarland University

Department of Mathematics

Postfach 15 11 50,  66041 Saarbr\"{u}cken,  Germany  

Email address: leid@math.uni-sb.de

\vspace{4mm}

Noriyoshi Sakuma 

Nagoya City University

Graduate School of Science 

1, Yamanohata, Mizuho-cho, Mizuho-ku, Nagoya, 467-8501, Japan 

Email address: sakuma@nsc.nagoya-cu.ac.jp

\end{flushleft}



\begin{thebibliography}{9}
\bibitem{ArizmendiCebron}
O. Arizmendi, G. C\'ebron, N. Gilliers.
Combinatorics of cyclic-conditional freeness
arXiv:2311.13178
\bibitem{Arizmendi2021} O. Arizmendi, A. Celestino.
Polynomial with cyclic monotone elements with applications to random matrices with discrete spectrum. 
Random Matrices Theory Appl. 10 (2021), no. 2, article no. 2150020 (19 pages). 

\bibitem{Arizmendi2023} O. Arizmendi, T. Hasebe, F. Lehner.
Cyclic independence: Boolean and monotone.
Algebr. Comb. 6 (2023), no. 6, 1697--1734.

\bibitem{BBP2005} 
J. Baik, G. Ben Arous, S. P\'ech\'e.
Phase transition of the largest eigenvalue for nonnull complex sample covariance matrices, Ann. Probab. 33 (5) (2005) 1643--1697. 

\bibitem{Benaych2011} F. Benaych-Georges, R. R. Nadakuditi.
The eigenvalues and eigenvectors of finite, low rank perturbations of large random matrices.
Adv. Math. 227 (2011), no. 1, 494--521.

\bibitem{Bil68} P.\ Billingsley. 
\textit{Convergence of Probability Measures (First Edition)}, John Wiley \& Sons, Inc., New York, 1968.

\bibitem{Bil99} P.\ Billingsley. 
\textit{Convergence of Probability Measures (Second Edition)}, John Wiley \& Sons, Inc., New York, 1999.

\bibitem{C03} B.\ Collins. 
\textit{Int\'egrales matricielles et Probabilit\'es Non-Commutatives}. Math\'ematiques [math]. Universit\'e Pierre et Marie Curie - Paris VI, 2003. Fran\c{c}ais. NNT: tel-00004306.

\bibitem{CapFer2009} M. Capitaine, C. Donati-Martin, D. F\'eral. 
The largest eigenvalues of finite rank deformation of large Wigner matrices: convergence and nonuniversality of the fluctuations, Ann. Probab. 37 (1) (2009) 1--47. 

\bibitem{Cebron2023} G. C\'ebron, A. Dahlqvist, F. Gabriel.
Freeness of type B and conditional freeness for random matrices.
To appear in  Indiana Mathematics Journal, arXiv:2205.01926.

\bibitem{CebronGilliers2024} G. C\'ebron, N. Gilliers.
Asymptotic cyclic-conditional freeness of random matrices.
Random Matrices Theory Appl. 13 (2024), no. 1, Paper No. 2350014, 48 pp.

\bibitem{CHS18} B.\ Collins, T.\ Hasebe, N.\ Sakuma, Free probability for purely discrete eigenvalues of random matrices, J.\ Math.\ Soc.\ Japan 70, No.\ 3 (2018), 1111--1150. 

\bibitem{CLS} B.\ Collins, F.\ Leid, N.\ Sakuma. 
Matrix models for cyclic monotone and monotone independences, arXiv preprint (arXiv:2202.11666). 

\bibitem{CS06} B.\ Collins and P.\ \'Sniady. 
Integration with respect to the Haar measure on unitary,
orthogonal and symplectic group, Comm.\ Math.\ Phys.\ 264 (2006), 773--795. 

\bibitem{FerPe2007} D. F\'eral, S. P\'ech\'e. 
The largest eigenvalue of rank one deformation of large Wigner matrices, Comm. Math. Phys. 272(1) (2007) 185--228.

\bibitem{FG02} K.~Fritzsche and H.~Grauert. 
\textit{From Holomorphic Functions to Complex Manifolds}, Springer-Verlag, New York/ Berlin / Heidelberg, 2002. 
\bibitem{FujieHasebe}
K. Fujie, T. Hasebe.
Free probability of type B prime.
arXiv:2310.14582

\bibitem{Johnstone2001} I. M. Johnstone.
On the distribution of the largest eigenvalue in principal components analysis.
Ann. Statist. 29 (2001), no. 2, 295--327.

\bibitem{MingoSpeicher}
J. A. Mingo and R. Speicher.
\textit{Free probability and random matrices.} Fields Institute Monographs, 35. Springer, New York; Fields Institute for Research in Mathematical Sciences, Toronto, ON, 2017. xiv+336 pp.
\bibitem{Paul2007} D. Paul. 
Asymptotics of sample eigenstructure for a large dimensional spiked covariance model, Statist. Sinica 17 (4) (2007) 1617--1642.

\bibitem{Peche2006} S. P\'ech\'e. 
The largest eigenvalue of small rank perturbations of Hermitian random matrices, Probab. Theory Related Fields 134 (1) (2006) 127--173. 

\bibitem{Shlyakhtenko2018} D. Shlyakhtenko.
Free probability of type-B and asymptotics of finite-rank perturbations of random matrices.
Indiana Univ. Math. J. 67 (2018), no. 2, 971--991.

\end{thebibliography}
\end{document}